\DeclareMathAlphabet{\mathpzc}{OT1}{pzc}{m}{it}
\newtheorem{thm}{Theorem}[section]
\newtheorem{lem}[thm]{Lemma}
\newtheorem{prop}[thm]{Proposition} 
\newtheorem{defn}[thm]{Definition}
\newtheorem{rem}[thm]{Remark}
\newtheorem{ques}[thm]{Question}
\newcommand{\p}{\mathpzc{p}}
\newcommand{\m}{\mathpzc{m}}
\newcommand{\n}{\mathpzc{n}}
\newcommand{\bQ}{\mathbb Q}
\newcommand{\A}{\mathbb A}
\title{On Residual and Stable Coordinates}
\author{Amartya Kumar Dutta{\footnote
{{\small{\it Statistics and Mathematics  Unit, Indian Statistical Institute,}}
{\small{\it 203 B.T. Road, Kolkata 700 108, India.}}
{\small{\it e-mail : amartya.28@gmail.com}}}},
Animesh Lahiri{\footnote{{\small {\it Swami Vivekananda Research Centre,
Ramakrishna Mission Vidyamandira,}}
{\small {\it P.O. Belur Math, Howrah 711202, India.}}
{\small {\it e-mail : 255alahiri@gmail.com }}}}}  
\begin{document}
\date{}
\maketitle

\begin{abstract}
In a recent paper \cite{K}, M. E. Kahoui and M. Ouali have proved that over an algebraically closed field $k$ of characteristic zero, residual coordinates in $k[X][Z_1,\dots,Z_n]$ are one-stable coordinates. In this paper we extend their result to the case of an algebraically closed field $k$ of arbitrary characteristic. In fact, we show that the result holds when $k[X]$ is replaced by any one-dimensional seminormal domain $R$ which is affine over an algebraically closed field $k$. For our proof, we extend a result of S. Maubach in \cite{M} giving a criterion for a polynomial of the form $a(X)W+P(X,Z_1,\dots,Z_n)$ to be a coordinate in $k[X][Z_1,\dots,Z_n,W]$.\\
\indent 
Kahoui and Ouali had also shown that over a  Noetherian $d$-dimensional ring $R$ containing $\bQ$ any residual coordinate in $R[Z_1,\dots,Z_n]$  is an $r$-stable coordinate, where $r=(2^d-1)n$. We will give a sharper bound for $r$ when $R$ is affine over an algebraically closed field of characteristic zero.   
\smallskip
  
\noindent
{\small {{\bf Keywords.} Polynomial algebra, Residual coordinate, Stable coordinate, Exponential map.}}

\noindent
{\small {{\bf AMS Subject classifications (2010)}. Primary: 13B25; Secondary: 14R25, 14R10, 13A50}}. 
\end{abstract}

\section{Introduction}
We will assume all rings to be commutative containing unity. The notation  $R^{[n]}$ will be used to denote any $R$-algebra isomorphic to a polynomial algebra in $n$ variables over $R$.
 
\medskip
We will discuss results connecting coordinates, residual coordinates and stable coordinates in polynomial algebras (see 2.2 to 2.4 for definitions). The study was initiated for the case $n=2$ by Bhatwadekar and Dutta (\cite{BD1}) and later extended to $n>2$ by  Das and Dutta (\cite{DD}).

\medskip
An important problem in the study of polynomial algebras is to find fibre conditions for a polynomial $F$ in a polynomial ring $A=R^{[n+1]}$ to be a coordinate in $A$. In the case $R=k[X]$, where $k$ is an algebraically closed field of characteristic zero,  S. Maubach gave  the following useful result for polynomials linear in one of the variables (\cite[Theorem 4.5]{M}).
\begin{thm}\label{maubach}
Let $k$ be an algebraically closed field of characteristic zero, $P(X,Z_1,\dots,Z_n)$ be  an element in the polynomial ring $k[X,Z_1,\dots,Z_n]$ and $a(X)\in{k[X]}-\{0\}$. Suppose, for each root $\alpha$ of $a(X)$, $P(\alpha,Z_1,\dots,Z_n)$ is a coordinate in $k[Z_1,\dots,Z_n]$. Then, the polynomial $F$ defined by $F:=a(X)W+P(X,Z_1,\dots,Z_n)$ is a coordinate in $k[X,Z_1,\dots,Z_n,W](=k^{[n+2]})$, along with $X$. 
\end{thm}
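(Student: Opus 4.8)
The plan is to prove directly that $F$ extends to a coordinate system, rather than merely that $A/(F)$ is polynomial. Write $R=k[X]$ and $A=R[Z_1,\dots,Z_n,W]$. I would first record the following reduction: it suffices to produce an $R$-automorphism $\sigma$ of $R[Z_1,\dots,Z_n]$ with $\sigma(P)\equiv Z_1 \pmod{a\,R[Z_1,\dots,Z_n]}$. Indeed, extend such a $\sigma$ to $A$ by fixing $W$; then $\sigma(F)=aW+Z_1+a\,g=a(W+g)+Z_1$ for some $g\in R[Z_1,\dots,Z_n]$, and the triangular $R$-automorphism $\rho\colon W\mapsto W-g$ (all other variables fixed) carries $\sigma(F)$ to $aW+Z_1$. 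Since $Z_1=(aW+Z_1)-aW$, the element $aW+Z_1$ is visibly a coordinate of $A$ with complement $Z_2,\dots,Z_n,W$; pulling back through $(\rho\circ\sigma)^{-1}$, which fixes $X$, exhibits $F$ as a coordinate together with $X$.

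Second, I would construct $\sigma$ fibrewise. Because $k$ is algebraically closed, every root $\alpha$ of $a$ lies in $k$, so each fibre ring is literally $k[Z_1,\dots,Z_n]$ and the hypothesis applies verbatim: there is $\tau_\alpha\in\operatorname{Aut}_k(k[Z_1,\dots,Z_n])$ with $\tau_\alpha(P(\alpha,Z_1,\dots,Z_n))=Z_1$. By the Chinese Remainder Theorem, $R/aR\cong\prod_\alpha R_\alpha$ with $R_\alpha=k[X]/(X-\alpha)^{e_\alpha}$ local Artinian of residue field $k$, so it is enough to straighten the image of $P$ in each $R_\alpha[Z_1,\dots,Z_n]$ and then assemble. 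Over $R_\alpha$ I would lift the straightening of $P$ across the nilpotent thickening $R_\alpha\twoheadrightarrow k$, one square-zero extension at a time; the obstruction to each step lands in a coherent cohomology group of $\A^n_k$, which vanishes by affineness, and the lift is produced by exponentiating a locally nilpotent derivation. This is the first place the characteristic-zero hypothesis is essential, since integrating the derivation requires dividing by integers. The outcome is $\bar\sigma\in\operatorname{Aut}_{R/aR}((R/aR)[Z_1,\dots,Z_n])$ with $\bar\sigma(\bar P)=Z_1$.

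The remaining task, lifting the fibrewise automorphism $\bar\sigma$ to an honest $R$-automorphism $\sigma$ of $R[Z_1,\dots,Z_n]$, is the heart of the matter, and I expect it to be the main obstacle: it is exactly the patching step that fails for general residual coordinates, so it cannot be purely formal. Lifting the defining polynomials of $\bar\sigma$ coefficientwise yields only an $R$-endomorphism congruent to $\bar\sigma$ modulo $a$, whose Jacobian is a unit modulo $a$ but need not be a unit in $R[Z_1,\dots,Z_n]$; repairing this is where the real work lies. Over $R=k[X]$ I would exploit that $R$ is a PID, so that by Quillen--Suslin the relevant projective modules over $R[Z_1,\dots,Z_n]$ are free and $\mathrm{SL}_m$ is generated by elementaries, and thereby realize $\bar\sigma$ as a composite of elementary, triangular, and linear pieces that do lift along $k[X]\twoheadrightarrow R/aR$, producing the desired $\sigma$ and completing the proof. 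It is precisely this gluing phenomenon that the present paper later controls by seminormality in positive characteristic; here the normality of $k[X]$ together with the char-zero exponential machinery is what makes the lift go through.
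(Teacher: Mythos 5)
Your step 1 (the triangular reduction: once $\sigma(P)\equiv Z_1 \pmod{a}$, compose with $W\mapsto W-g$ and conclude) is correct, and your step 2 does produce $\bar\sigma$ over $R/aR$ with $\bar\sigma(\bar P)=Z_1$ --- though far more laboriously than needed, since ``coordinate modulo nilpotents implies coordinate'' is the characteristic-free Lemma \ref{nilradical} (combined with CRT this is exactly Proposition \ref{Artinian}); no obstruction theory or exponentials of derivations are required, and characteristic zero is \emph{not} where you say it is. The fatal problem is step 3, which you flag as ``the heart of the matter'' and then do not close. Your reduction demands that the coordinate $\bar P$ of $(R/aR)[Z_1,\dots,Z_n]$ lift to an honest coordinate of $R[Z_1,\dots,Z_n]$ (equivalently, that $P$ be congruent mod $a$ to a coordinate over $R$): this is strictly \emph{stronger} than the theorem you are proving, and no proof of it is known. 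The proposed repair --- factor $\bar\sigma$ into elementary, triangular and linear pieces via Quillen--Suslin --- cannot work as stated: Quillen--Suslin and elementary generation of $\mathrm{SL}_m$ control only the linear part of the automorphism group, and polynomial automorphism groups are not generated by such pieces in general; in characteristic zero wild automorphisms exist already over a field for $n=3$ (Nagata's automorphism, by Shestakov--Umirbaev), and over the non-reduced Artinian ring $R/aR$ tameness fails even in two variables. So the assembly step is a genuine missing idea, not a routine verification, and the argument as proposed does not constitute a proof.

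The paper's own route (the proof of Theorem \ref{ext-Maubach}, which contains Theorem \ref{maubach} as the case $R=k[X]$) deliberately sidesteps any lifting of automorphisms or coordinates. One first reduces to $R$ local via Lemma \ref{one dimensional cancellation} (Bass--Connell--Wright/Suslin, Quillen, and Bass cancellation); then, with $g=\eta(P)$ the residue of $P$ at the closed point and $g,g_2,\dots,g_n$ a coordinate system of $k[Z_1,\dots,Z_n]$, one rewrites $R[Z_1,\dots,Z_n,W]=A[g,W]=A^{[2]}$ over the larger base $A:=R[g_2,\dots,g_n]$ and checks that $F$ is a \emph{residual} coordinate of this two-variable polynomial ring (at primes over $\m$ because $F\equiv g$, at the remaining primes because $a$ is invertible there); Theorem \ref{res-stable} of Bhatwadekar--Dutta then makes $F$ an actual coordinate. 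Note that this is the only place characteristic zero (or, alternatively, seminormality of $R_{red}$ --- automatic for $k[X]$) is used, confirming that your attribution of the char-zero hypothesis to the nilpotent-lifting step misreads where the difficulty lies. If you want to salvage your outline, you would have to replace step 3 by something like this change-of-base trick, since lifting $\bar\sigma$ itself is precisely the obstruction that makes the residual-versus-actual coordinate problem hard.
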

In this paper, we will show that Maubach's result holds when $k[X]$ is replaced by \textit{any} one-dimensional ring $R$ which is affine over an algebraically closed field $k$ such that either the characteristic of $k$ is zero or $R_{red}$ is seminormal and  $a(X)$ is replaced by a non-zerodivisor $a$ in $R$ for which the image of $P$ becomes a coordinate over $R/aR$ (see Theorem \ref{ext-Maubach}).

\medskip
As an application of Theorem \ref{maubach}, Kahoui and Ouali  have recently proved the following two results  on the connection between residual coordinates and stable coordinates (\cite[Theorem 1.1 and Theorem 1.2]{K}).
\begin{thm}\label{Kahoui}
Let $k$ be an algebraically closed field of characteristic zero, $R=k[X]~(=k^{[1]})$ and $A=R[Z_1,\dots,Z_n]~(=R^{[n]})$. Then every residual coordinate in $A$ is a $1$-stable coordinate in $A$.
\end{thm}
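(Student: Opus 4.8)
The plan is to follow the ``spread out and patch'' paradigm of Bhatwadekar--Dutta and Das--Dutta, using Maubach's theorem (Theorem~\ref{maubach}) as the patching device. Write $R=k[X]$ and $A=R[Z_1,\dots,Z_n]$, and let $F\in A$ be a residual coordinate. First I would exploit the generic fibre: applying the residual hypothesis to the prime $(0)\subset R$ shows that the image of $F$ in $k(X)[Z_1,\dots,Z_n]$ is a coordinate. A coordinate system for $F$ over $k(X)$, together with its inverse, involves only finitely many denominators; taking $a=a(X)\in R-\{0\}$ to be their product, a standard spreading-out argument shows that $F$ is a coordinate in the localized ring $R_a[Z_1,\dots,Z_n]$. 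Since $k$ is algebraically closed, the closed points of $\operatorname{Spec}R$ lying outside $D(a)$ are exactly the finitely many roots $\alpha_1,\dots,\alpha_m$ of $a(X)$, so these are the only points at which the generic coordinate structure can fail to extend.

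Next I would bring in the residual hypothesis at these bad points. For each root $\alpha_i$ the residue field is $k$, and since $F$ is a residual coordinate, $F(\alpha_i,Z_1,\dots,Z_n)$ is a coordinate in $k[Z_1,\dots,Z_n]$. This is precisely the hypothesis of Theorem~\ref{maubach} with $P:=F$ and the chosen $a(X)$. Passing to the one-variable extension $A[W]=R[Z_1,\dots,Z_n,W]$ and applying Theorem~\ref{maubach} to $a(X)W+F$, I conclude that $U:=a(X)W+F$ is a coordinate in $A[W]$ (together with $X$); thus $R[Z_1,\dots,Z_n,W]=R[U]^{[n]}$.

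The hardest step will be to upgrade this to the assertion that $F$ \emph{itself} is a coordinate in $A[W]$, i.e.\ that $F$ is a $1$-stable coordinate, since the element $U=a(X)W+F$ produced by Maubach is not $F$. Here I would glue the generic coordinate system with the one produced by Maubach. Over $R_a$ the element $a(X)$ is a unit, so $W\mapsto a(X)W+F$ is an $R_a[Z_1,\dots,Z_n]$-automorphism of $R_a[Z_1,\dots,Z_n,W]$; combining this with the $R_a$-coordinate system $\{F,G_2,\dots,G_n\}$ for $F$ gives a coordinate system of $R_a[Z_1,\dots,Z_n,W]$ containing both $F$ and $U$, so that $F$ is an $S_a$-coordinate where $S:=R[U]$. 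Globally $U$ is a coordinate, so $R[Z_1,\dots,Z_n,W]=S^{[n]}$, and I would then patch the coordinate system containing $F$ over the localization with the integral structure carried by $S$ near the fibres over $\alpha_1,\dots,\alpha_m$. The main obstacle is making this gluing precise: one must control the denominators that are powers of $a(X)$ and exhibit the transition between the generic system and the local systems at the $\alpha_i$ as an honest $R$-automorphism of the $(n+1)$-variable ring rather than merely over $R_a$. This is exactly the point at which the single added variable $W$ supplies the room needed for the reconciliation, and where the fact that $R=k[X]$ is a PID (so that the relevant stably-free/elementary obstructions vanish) is essential.
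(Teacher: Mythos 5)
Your opening steps coincide with the strategy the paper actually uses (in the proof of its generalization, Theorem \ref{ext-Kahoui}, which specializes to this statement): pass to the generic fibre, spread out to get that $F$ is a coordinate over $R_a$ for some $a\in R-\{0\}$, note that the residual hypothesis at the roots of $a$ feeds exactly into Theorem \ref{maubach}, and conclude that $U:=a(X)W+F$ is a coordinate in $A[W]$. But your final step --- upgrading ``$U$ is a coordinate in $A[W]$'' to ``$F$ is a coordinate in $A[W]$'' --- is precisely where the argument stops being a proof. You concede as much (``the main obstacle is making this gluing precise''), and the sketch you offer (an $S_a$-coordinate system with $S=R[U]$, patching over $D(a)$ against the fibres over the roots of $a$, an appeal to PID/stably-free obstructions) is never carried out: nothing in it exhibits the transition between the generic coordinate system and the local ones as an honest $R$-automorphism of the $(n+1)$-variable ring, which is the entire content of the remaining difficulty.

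The paper closes this gap with a short device you did not consider: exponential maps. In Lemma \ref{exp} the spreading-out element is taken to be a sufficiently high power $a=c^m$, so that the $R_c[g_2,\dots,g_n]$-algebra substitution $\psi_W(F)=F+aW$ (fixing the complementary generic coordinates $g_2,\dots,g_n$) sends each $Z_i$ into $A[W]$ and hence restricts to an $R$-linear exponential map $\varphi_W:A\to A[W]$ with $\varphi_W(F)=aW+F$. The key point you are missing is that the inverse then comes for free: by Proposition \ref{exp-auto}, the extension $\widetilde{\varphi_W}$ of $\varphi_W$ to $A[W]$ (with $W\mapsto W$) is an $R[W]$-automorphism of $A[W]$, because its inverse is given by the sequence $\{(-1)^{i}\varphi^{(i)}\}$ and each $\varphi^{(i)}$ maps $A$ into $A$, being a coefficient of $\varphi_W$. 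Thus $F=\widetilde{\varphi_W}^{-1}(aW+F)$ is a coordinate in $A[W]$ because automorphisms carry coordinates to coordinates --- no patching and no projective-module input is needed at this final stage. (Incidentally, your closing claim that the PID property of $k[X]$ is essential is belied by the paper: the identical argument proves Theorem \ref{ext-Kahoui} over any one-dimensional affine algebra over an algebraically closed field; the local-global bookkeeping is absorbed into Lemma \ref{one dimensional cancellation} inside the Maubach step, not into the upgrade from $U$ to $F$.)
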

\begin{thm}\label{Kahoui-general}
Let $R$ be a Noetherian $d$-dimensional ring containing $\bQ$ and $A=R[Z_1,\dots,Z_n]~(=R^{[n]})$. Then every residual coordinate in $A$ is a $(2^d-1)n$-stable coordinate in $A$.
\end{thm}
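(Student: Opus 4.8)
The plan is to induct on $d=\dim R$, the point being that the asserted bound is the unique solution of the recursion $r_{d}=2r_{d-1}+n$ with $r_{0}=0$, since $2(2^{d-1}-1)n+n=(2^{d}-1)n$. Throughout, the hypothesis $\bQ\subseteq R$ will be used exactly as in Theorem~\ref{maubach}: to have exponential maps at hand for integrating the locally nilpotent derivations produced at the gluing step. First I would reduce to the case that $R$ is reduced. Passing from $R$ to $R_{red}$ alters neither the dimension nor the residue field at any prime, so $F$ stays a residual coordinate; conversely a stable coordinate over $R_{red}$ lifts to one over $R$, because an automorphism realizing it lifts across the nilpotent surjection $R\twoheadrightarrow R_{red}$ and the lift is again an automorphism by a Nakayama argument for the nilpotent kernel (send the distinguished variable to $F$ itself). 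The base case $d=0$ is then immediate: a reduced Noetherian ring of dimension $0$ is a finite product of fields, over each of which $F$ is a coordinate by hypothesis, so $F$ is already a coordinate and $r_{0}=0$ suffices.

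For the inductive step assume $d\ge 1$ and $R$ reduced. Its total quotient ring is the finite product $\prod_{i}\kappa(\p_{i})$ over the minimal primes $\p_{i}$, and $F$ is a coordinate over each factor by the residual hypothesis, hence over the product. Clearing the finitely many denominators appearing in the realizing automorphism and its inverse produces a non-zerodivisor $a\in R$ such that $F$ is an \emph{honest} coordinate in $R_{a}[Z_{1},\dots,Z_{n}]$. Since $a$ is a non-zerodivisor it lies in no minimal prime, so $\dim R/aR\le d-1$; and the image $\bar F$ of $F$ is plainly a residual coordinate over $R/aR$. By the induction hypothesis $\bar F$ is therefore an $r_{d-1}$-stable coordinate over $R/aR$: after adjoining variables $Y_{1},\dots,Y_{r_{d-1}}$, it is an honest coordinate in $(R/aR)[Z_{1},\dots,Z_{n},Y_{1},\dots,Y_{r_{d-1}}]$.

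It remains to glue the honest coordinate over the open set $\operatorname{Spec}R_{a}$ to the one over the closed set $\operatorname{Spec}R/aR$, and this is the step I expect to be the main obstacle. The natural device is a general-base analogue of Theorem~\ref{maubach}, in which the role of ``$a(X)$ and its roots'' is played by ``$a\in R$ and $\operatorname{Spec}R/aR$'': namely, if $F\bmod a$ is a coordinate in $(R/aR)[Z_{1},\dots,Z_{n},Y_{1},\dots,Y_{r_{d-1}}]$ then $aW+F$ should be a coordinate in $R[Z_{1},\dots,Z_{n},Y_{1},\dots,Y_{r_{d-1}},W]$. Granting this, $aW+F$ is an honest coordinate over $R$ after $r_{d-1}+1$ new variables, and one must then pass from ``$aW+F$ is a coordinate over $R$'' and ``$F$ is a coordinate over $R_{a}$'' to ``$F$ is a stable coordinate over $R$''. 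This reconciliation amounts to realizing $W$ as a member of a coordinate system that also contains $aW+F$, so that the elementary move $aW+F\mapsto (aW+F)-aW=F$ becomes admissible; it is here that a second batch of roughly $r_{d-1}$ variables must be spent, together with $n$ further variables to absorb the original $Z_{i}$ when comparing the two charts, and this doubling is precisely what forces the factor $2^{d}$ rather than a linear bound. The two genuinely hard points are thus establishing the general-base Maubach criterion over an arbitrary Noetherian $\bQ$-algebra (Theorem~\ref{maubach} is stated only over $k[X]$, so one must re-prove it with $\operatorname{Spec}R/aR$ in place of a finite set of roots, the integrations again relying on $\bQ\subseteq R$) and carrying out the stabilization bookkeeping so that the total count lands at exactly $2r_{d-1}+n=(2^{d}-1)n$.
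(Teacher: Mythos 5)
Your skeleton --- induct on $d$, reduce to $R$ reduced, use that the total quotient ring of a reduced Noetherian ring is Artinian to find a non-zerodivisor $a$ with $F$ a coordinate over $R_a$, apply the induction hypothesis to $\overline{F}$ over the $(d-1)$-dimensional ring $R/aR$, and close the recursion $r_d=2r_{d-1}+n$ --- matches the paper's proof of (the stronger) Theorem \ref{ext kahoui general}. But the step you flag as the main obstacle is a genuine gap, and the lemma you posit to fill it is too strong: the ``general-base honest-coordinate Maubach criterion'' (if $F \bmod a$ is a coordinate over $R/aR$ then $aW+F$ is a \emph{coordinate} over $R$) is precisely the open Question posed at the end of Section 3 of this paper; it is proved here only for one-dimensional affine algebras over an algebraically closed field (Theorem \ref{ext-Maubach}), which does not apply in the inductive step over a $d$-dimensional base. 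What the actual proof uses instead is the weaker but \emph{known} stable version, Theorem \ref{essen stable} (Berson--Bikker--van den Essen): if $P \bmod a$ is an $m$-stable coordinate over $R/aR$, then $aW+P$ is a $(2m+n-1)$-stable coordinate over $R$. With $m=(2^{d-1}-1)n$ this yields $(2^d-1)n-1$ directly; the factor $2$ in your recursion is already built into the citation, and none of the ``stabilization bookkeeping'' you defer needs to be redone.

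Your second unresolved point --- passing from ``$aW+F$ is a (stable) coordinate'' to ``$F$ is a stable coordinate'' by realizing $W$ in a coordinate system containing $aW+F$ and spending a second batch of variables --- is also not how the argument closes, and as sketched it is not a proof. The paper's device is Lemma \ref{exp}: from the coordinate presentation of $P$ over $S^{-1}R$ one constructs an explicit $R$-linear exponential map $\varphi_W\colon A\to A[W]$ with $\varphi_W(P)=aW+P$ (this construction simultaneously produces the non-zerodivisor $a$, packaging your denominator-clearing step), and by Proposition \ref{exp-auto} the extension of $\varphi_W$ to $A[W]$ fixing $W$ is an $R[W]$-automorphism of $A[W]$. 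Hence $P$ and $aW+P$ are equivalent under an automorphism of $A[W]$, so stability transfers at the cost of exactly the one variable $W$, giving $r_d=(2r_{d-1}+n-1)+1$. Note finally that this exponential map is an explicit translation in adapted coordinates, not the integral of a locally nilpotent derivation, so your premise that $\bQ\subseteq R$ is needed ``for integrating derivations'' is off the mark: the hypothesis is never used, which is exactly how the paper strengthens the statement to Theorem \ref{ext kahoui general} in arbitrary characteristic.
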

Using our generalization of Maubach's result (Theorem \ref{ext-Maubach}) and the concept  of exponential maps (see Definition \ref{exp map} and Proposition \ref{exp-auto}) we will generalize Theorem \ref{Kahoui} to the case when $k[X]$ is replaced by a one-dimensional ring $R$ which is affine over an algebraically closed field $k$ such that either the characteristic of $k$ is zero or $R_{red}$ is seminormal (see Theorem \ref{ext-Kahoui}). Next, we will show (Theorem \ref{ext kahoui general}) that the condition ``$R$ contains $\bQ$'' can be dropped from Theorem \ref{Kahoui-general}. We will also show that when $R$ is affine over an algebraically closed field of characteristic zero, the bound $(2^d-1)n$ given in Theorem \ref{Kahoui-general} can be reduced to $2^{d-1}(n+1)-n$ (see Theorem \ref{efficient bound}).

\section{Preliminaries}
In this section we recall a few definitions and some well-known results. 
\begin{defn}
{\em A reduced ring $R$ is said to be {\it seminormal} if it satisfies the  condition: for $b,c\in{R}$ with $b^3=c^2$, there is an $a\in{R}$ such that $a^2=b$ and $a^3=c$.}
\end{defn}
\begin{defn}
{\em Let $A=R[X_1,\dots,X_n]~(=R^{[n]})$ and $F\in{A}$. $F$ is said to be a {\it coordinate in $A$}  if there exist $F_2,\dots,F_n \in{A}$ such that $A=R[F,F_2,\dots,F_n]$.}
\end{defn}
\begin{defn}
{\em Let $A=R[X_1,\dots,X_n]~(=R^{[n]})$, $F\in{A}$ and $m\geq{0}$. $F$ is said to be an {\it $m$-stable coordinate in $A$} if $F$ is a coordinate in $A^{[m]}$.}
\end{defn}
\begin{defn}
{\em Let $A=R[X_1,\dots,X_n]~(=R^{[n]})$ and $F\in{A}$. $F$ is said to be a {\it residual coordinate in $A$} if, for each prime ideal $\p$ of $R$, $A\otimes_{R}k(\p)=k(\p)[\overline{F}]^{[n-1]}$, where $\overline{F}$ denotes the image of $F$ in $A\otimes_{R}k(\p)$ and $k(\p):= \dfrac{R_{\p}}{\p R_{\p}}$ is the residue field of $R$ at $\p$.}
\end{defn}
Now, we state some known results connecting coordinates, residual coordinates and stable coordinates in polynomial algebras. First, we state an elementary result (\cite[Lemma 1.1.9]{E}).
\begin{lem}\label{nilradical}
Let $R$ be a ring, $nil(R)$ denote the nilradical of $R$ and $F\in{R[Z_1,\dots,Z_n]}~(=R^{[n]})$. Let $\overline{R}:=\dfrac{R}{nil(R)}$ and $\overline{F}$ denote the image of $F$ in $\overline{R}[Z_1,\dots,Z_n]$. Then $F$ is a coordinate in $R[Z_1,\dots,Z_n]$ if and only if $\overline{F}$ is a coordinate in $\overline{R}[Z_1,\dots,Z_n]$. 
\end{lem}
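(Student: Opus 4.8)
The plan is to treat the two implications separately, with the forward one essentially immediate and the converse carrying all the content. For the forward direction, suppose $F$ is a coordinate, so $A = R[F,F_2,\dots,F_n]$ for some $F_2,\dots,F_n \in A$, where $A = R[Z_1,\dots,Z_n]$. Applying the surjective $R$-algebra homomorphism $A \to \overline{A} = \overline{R}[Z_1,\dots,Z_n]$ induced by $R \to \overline{R} = R/nil(R)$ gives $\overline{A} = \overline{R}[\overline{F},\overline{F_2},\dots,\overline{F_n}]$, so $\overline{F}$ is a coordinate. Nothing further is needed here.

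For the converse, assume $\overline{A} = \overline{R}[\overline{F},\overline{G_2},\dots,\overline{G_n}]$ for suitable $\overline{G_i} \in \overline{A}$. I would lift each $\overline{G_i}$ to an element $G_i \in A$ and set $B := R[F,G_2,\dots,G_n] \subseteq A$; the goal is to prove $B = A$, which by definition exhibits $F$ as a coordinate. First I would record the standard structural fact that $nil(A) = nil(R)[Z_1,\dots,Z_n]$, so that the kernel of $A \to \overline{A}$ is exactly $nil(A)$, i.e.\ reduction of coefficients modulo $nil(R)$ agrees with reduction modulo $nil(A)$. Since each $\overline{Z_j}$ lies in $\overline{B}$, I can lift its defining polynomial expression to write $Z_j = P_j(F,G_2,\dots,G_n) + \zeta_j$, where $P_j$ is a polynomial over $R$ and $\zeta_j \in nil(A)$. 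As the $Z_j$ generate $A$ over $R \subseteq B$, this yields $A = B + nil(R)\,A$.

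The main obstacle is that $nil(R)$ need not itself be nilpotent, so a naive Nakayama argument over this ideal would not terminate. I would circumvent this by a finiteness reduction: the finitely many polynomials $\zeta_1,\dots,\zeta_n$ involve only finitely many coefficients, all lying in $nil(R)$, so the ideal $I$ they generate is a finitely generated nil ideal and hence nilpotent, say $I^N = 0$. Because $\zeta_j \in IA$, we get $A = B + IA$ with the crucial feature $I \subseteq nil(R) \subseteq R \subseteq B$. Now the Nakayama iteration closes cleanly: since $I \subseteq B$ and $B$ is a subring, $IA = I(B + IA) = IB + I^2A \subseteq B + I^2A$, whence $A = B + I^2A$, and inductively $A = B + I^kA$ for every $k$. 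Taking $k = N$ gives $A = B + I^N A = B$, so $A = R[F,G_2,\dots,G_n]$ and $F$ is a coordinate in $A$. The only genuinely delicate points are the identification $nil(A) = nil(R)[Z_1,\dots,Z_n]$ and the passage from a nil ideal to a nilpotent one via finite generation; everything else is formal.
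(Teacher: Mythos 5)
Your proof is correct. Note, however, that the paper itself gives no proof of this lemma at all: it is quoted from van den Essen's book (\cite[Lemma 1.1.9]{E}), so your argument supplies a self-contained proof where the paper only cites. Your two key structural inputs are both sound: $nil(A)=nil(R)[Z_1,\dots,Z_n]=nil(R)A$ for $A=R[Z_1,\dots,Z_n]$, and the observation that a finitely generated nil ideal is nilpotent, which is exactly what rescues the Nakayama-type iteration in the absence of any Noetherian hypothesis on $R$ (the nilradical itself need not be nilpotent, as you correctly flag). The step $IA=IB+I^2A\subseteq B+I^2A$ works precisely because $I\subseteq R\subseteq B$ and $B$ is a subring, so the iteration $A=B+I^kA$ terminates at $k=N$ with $I^N=0$; this is the same finiteness-plus-filtration mechanism that underlies the standard textbook proof, which is usually phrased in terms of lifting inverses of polynomial maps modulo a nilpotent ideal and inducting on its nilpotency index. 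Your version is arguably cleaner in that it never composes automorphisms or constructs an inverse map: you work purely with subring generation, showing $B:=R[F,G_2,\dots,G_n]$ exhausts $A$, which matches the paper's definition of coordinate directly. One point worth making explicit if you write this up: the passage from $Z_j=P_j(F,G_2,\dots,G_n)+\zeta_j$ to $A=B+(\zeta_1,\dots,\zeta_n)A$ uses the Taylor-type identity $f(b_1+\zeta_1,\dots,b_n+\zeta_n)-f(b_1,\dots,b_n)\in(\zeta_1,\dots,\zeta_n)A$ for every $f$ with coefficients in $R$, which holds since $X_j-Y_j$ divides the relevant differences; you gesture at this but do not state it.
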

The following result has been proved by Kahoui and Ouali in \cite[Lemma 3.4]{K}.
\begin{prop}\label{Artinian}
Let $R$ be an Artinian ring and $A=R[Z_1,\dots,Z_n]~(=R^{[n]})$. Then every residual coordinate in $A$ is a coordinate in $A$.
\end{prop}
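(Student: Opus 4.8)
The plan is to reduce modulo the nilradical and then exploit the structure theory of Artinian rings. First I would invoke Lemma~\ref{nilradical}: writing $\overline{R}:=R/nil(R)$ and $\overline{F}$ for the image of $F$ in $\overline{R}[Z_1,\dots,Z_n]$, it suffices to prove that $\overline{F}$ is a coordinate in $\overline{R}[Z_1,\dots,Z_n]$. Since $R$ is Artinian, its nilradical is nilpotent and $\overline{R}$ is a \emph{reduced} Artinian ring; hence $\overline{R}$ is a finite direct product of fields. Moreover, every prime of $R$ is maximal and there are only finitely many of them, say $\p_1,\dots,\p_s$, and the field factors of $\overline{R}$ are precisely the residue fields $k(\p_1),\dots,k(\p_s)$. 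Thus $\overline{R}\cong\prod_{i=1}^{s}k(\p_i)$, and correspondingly $\overline{R}[Z_1,\dots,Z_n]\cong\prod_{i=1}^{s}k(\p_i)[Z_1,\dots,Z_n]$.

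Next I would use the elementary fact that, over a finite product of rings, an element is a coordinate if and only if each of its components is a coordinate in the corresponding factor. Indeed, if $(\overline{F}^{(i)},G_2^{(i)},\dots,G_n^{(i)})$ is a coordinate system of $k(\p_i)[Z_1,\dots,Z_n]$ for each $i$, then assembling these componentwise yields a coordinate system of the product, since multiplication by the orthogonal idempotents of $\overline{R}$ recovers each component. Now the component of $\overline{F}$ in the $i$-th factor $k(\p_i)[Z_1,\dots,Z_n]$ is exactly the residual image of $F$ at $\p_i$, namely the image of $F$ in $A\otimes_{R}k(\p_i)$. By the definition of a residual coordinate, $A\otimes_{R}k(\p_i)=k(\p_i)[\overline{F}]^{[n-1]}$, so this component is a coordinate in $k(\p_i)[Z_1,\dots,Z_n]$.

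Combining the two steps, $\overline{F}$ is a coordinate in $\overline{R}[Z_1,\dots,Z_n]$, and therefore, by Lemma~\ref{nilradical}, $F$ is a coordinate in $A$. The argument carries essentially no analytic content: the only structural inputs are the nilpotence of the nilradical (so that Lemma~\ref{nilradical} applies), the decomposition of a reduced Artinian ring as a product of its residue fields, and the factorwise description of coordinates over a product ring. I do not anticipate a genuine obstacle here; the single point requiring a moment's care is the factorwise criterion for coordinates over a product, which one verifies directly using the orthogonal idempotents of $\overline{R}$.
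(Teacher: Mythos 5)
Your proof is correct. Note that the paper does not actually prove this proposition: it is quoted from Kahoui--Ouali \cite[Lemma 3.4]{K}, so there is no in-text argument to compare against; your route --- reduce modulo the nilradical via Lemma~\ref{nilradical}, decompose the reduced Artinian ring as $\prod_i k(\p_i)$, identify the $i$-th component of $\overline{F}$ with the residual image of $F$ in $A\otimes_R k(\p_i)$, and assemble coordinate systems componentwise using the orthogonal idempotents --- is the standard and expected one, and every step checks out. Two small remarks: Lemma~\ref{nilradical} as stated in the paper requires no nilpotence hypothesis on $nil(R)$, so your caveat on that point is unnecessary; and you use implicitly (correctly, and it deserves one line) that $\overline{F}$ is still a residual coordinate over $\overline{R}$, which holds because $R$ and $\overline{R}$ have the same prime spectrum and the same residue fields, with $A\otimes_R k(\p)=\overline{R}[Z_1,\dots,Z_n]\otimes_{\overline{R}}k(\p)$.
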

 The following result on residual coordinates was proved by Bhatwadekar and Dutta (\cite[Theorem 3.2]{BD1}). 
\begin{thm}\label{res-stable}
Let $R$ be a Noetherian ring such that either $R$ contains $\bQ$ or $R_{red}$ is seminormal. Let $A= R^{[2]}$ and $F\in{A}$. 
If $F$ is a residual coordinate in $A$, then $F$ is a coordinate in $A$.
\end{thm}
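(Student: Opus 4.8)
The plan is to introduce the tower $R\subseteq B:=R[F]\subseteq A$ and to establish two things: that $B\cong R^{[1]}$, and that $A$ is a polynomial ring in one variable over $B$. Together these give $A=R[F]^{[1]}$, which is precisely the assertion that $F$ is a coordinate in $A$. At the outset I would pass to the reduced case: by Lemma \ref{nilradical}, $F$ is a coordinate in $A$ if and only if its image over $R_{red}$ is, while the residual coordinate hypothesis is unaffected on replacing $R$ by $R_{red}$ since each $k(\p)$ factors through $R_{red}$. Under the seminormality alternative we may therefore assume $R$ is a Noetherian seminormal ring.

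First I would show $B=R[F]\cong R^{[1]}$. Consider the surjection $R[T]\twoheadrightarrow B$ sending $T\mapsto F$. A nonzero element $g$ of its kernel would have some coefficient lying outside a minimal prime $\p$ of $R$ (possible as $R$ is reduced); reducing $g(F)=0$ modulo $\p$ then yields a nontrivial algebraic relation satisfied by $\bar F$ over $k(\p)$, contradicting the fact that $\bar F$, being a coordinate in $A\otimes_R k(\p)=k(\p)^{[2]}$, is transcendental over $k(\p)$. Hence the kernel is zero, $B\cong R^{[1]}$, and in particular $B$ is $R$-free.

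Next I would verify that $A$ is an $\A^1$-fibration over $B$. The residual identity $A\otimes_R k(\p)=k(\p)[\bar F]^{[1]}$ says precisely that $A\otimes_R k(\p)$ is a polynomial ring in one variable over $B\otimes_R k(\p)=k(\p)[\bar F]$. Hence for every prime $\mathfrak q$ of $B$, lying over $\p$ say, base change gives $A\otimes_B k(\mathfrak q)=(A\otimes_R k(\p))\otimes_{B\otimes_R k(\p)}k(\mathfrak q)=k(\mathfrak q)^{[1]}$. Combined with flatness of $A$ over $B$ — both are $R$-flat and $A\otimes_R k(\p)$ is flat over $B\otimes_R k(\p)$, so the fibrewise criterion applies — this makes $A$ an $\A^1$-fibration over $B$.

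The remaining, and genuinely hard, step is to trivialise this fibration. Since seminormality ascends to polynomial rings, $B\cong R^{[1]}$ is Noetherian with $B$ seminormal (resp. $\bQ\subseteq B$ in the other case), so the structure theory of $\A^1$-fibrations — Bass--Connell--Wright, together with the local triviality supplied by the seminormal hypothesis (resp., in the $\bQ$-case, by characteristic-zero methods such as locally nilpotent derivations) — gives $A\cong\operatorname{Sym}_B(Q)$ for a rank-one projective $B$-module $Q$. The \emph{crux} is to prove that $Q$ is free: a nonfree $Q$ would exhibit a residual coordinate that is not a coordinate, so this is exactly where the hypotheses on $R$ must be used decisively. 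I would argue as follows. By Swan's theorem $\operatorname{Pic}(B)=\operatorname{Pic}(R^{[1]})\cong\operatorname{Pic}(R)$, whence $Q\cong P\otimes_R B$ for a rank-one projective $R$-module $P$; consequently $A\cong\operatorname{Sym}_R(P)\otimes_R B\cong\operatorname{Sym}_R(P)[F]$, realising $\operatorname{Sym}_R(P)$ as an $R$-algebra retract of $A=R^{[2]}$. The final task is the cancellation assertion that $\operatorname{Sym}_R(P)[F]\cong R^{[2]}$ forces $\operatorname{Sym}_R(P)\cong R^{[1]}$, equivalently that $P$ is free (the $R$-algebra isomorphism carries the natural augmentation of $\operatorname{Sym}_R(P)$ to an augmentation of $R^{[1]}$, all of which have free conormal module $P$); here again seminormality is the essential ingredient. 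Granting this, $Q\cong B$, so $A\cong B^{[1]}=R[F]^{[1]}$ and $F$ is a coordinate, completing the proof.
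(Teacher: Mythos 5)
First, a point of orientation: the paper does not prove this statement at all --- it is quoted verbatim from Bhatwadekar--Dutta \cite[Theorem 3.2]{BD1} --- so the comparison must be with the known proof there. Your first three steps are correct and match the standard skeleton of that proof: the reduction to $R$ reduced, the verification that $B:=R[F]\cong R^{[1]}$ via residual transcendence of $F$ over the minimal primes, and the check (fibres by base change through $B\otimes_R k(\p)$, flatness by the fibrewise criterion) that $A$ is an $\A^1$-fibration over $B$. The problem is that your step 4, which you yourself flag as the crux, is not proved but assumed. The assertion that ``local triviality is supplied by the seminormal hypothesis (resp., by locally nilpotent derivations in the $\bQ$-case)'' is precisely the hard content of the theorem: an $\A^1$-fibration over a Noetherian local ring is \emph{not} trivial in general (nontrivial ones exist over non-seminormal rings of the type $k[t^2,t^3]$), and there is no locally-nilpotent-derivation argument that trivializes $\A^1$-fibrations over local $\bQ$-algebras. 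The actual route in \cite{BD1} is different: Asanuma's structure theorem for $\A^r$-fibrations is used to show that $A$ is a \emph{stably} polynomial algebra over $B$, and then Hamann's theorem --- that a Noetherian ring with $R\supseteq\bQ$ or $R_{red}$ seminormal is steadfast --- performs the cancellation. Bass--Connell--Wright, which you cite, only converts ``locally polynomial'' into ``symmetric algebra of a projective''; it cannot produce the local polynomiality you need.

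There is a second, independent error in the same step: even granting $A\cong \operatorname{Sym}_B(Q)$, your appeal to ``Swan's theorem'' $\operatorname{Pic}(B)=\operatorname{Pic}(R^{[1]})\cong\operatorname{Pic}(R)$ is the Traverso--Swan theorem, which for reduced Noetherian $R$ holds \emph{if and only if} $R$ is seminormal. It is therefore unavailable in the $R\supseteq\bQ$ branch of the theorem: for instance $R=\bQ[t^2,t^3]$ contains $\bQ$, is not seminormal, and $\operatorname{Pic}(R[T])\supsetneq\operatorname{Pic}(R)$, so your descent of $Q$ to a projective $R$-module collapses exactly in the characteristic-zero case the theorem also covers (this is why \cite{BD1} needs Hamann's steadfastness theorem there, not a Picard-group computation). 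Finally, a minor point in the opposite direction: your last cancellation, $\operatorname{Sym}_R(P)[F]\cong R^{[2]}\Rightarrow P$ free, does \emph{not} in fact require seminormality --- comparing conormal modules of augmentations gives $P\oplus R\cong R^2$, whence $P\cong\bigwedge^2(P\oplus R)\cong\bigwedge^2(R^2)\cong R$. So you have mislocated the difficulty: the decisive use of the hypotheses on $R$ is in the trivialization/steadfastness step that your proposal invokes without proof, not in the final module-theoretic cancellation.
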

Now, we state a result on stable coordinates due to J. Berson, J. W. Bikker and A. van den Essen (\cite[Proposition 5.3]{BBE}); the following version was observed by Kahoui and Ouali in \cite{K}.
\begin{thm}\label{essen stable}
Let $R$ be a ring, $a$ be a non-zerodivisor of $R$ and $P\in{R[Z_1,\dots,Z_n]}~(=R^{[n]})$. Suppose, the image of $P$ is an $m$-stable coordinate in $\dfrac{R}{aR}[Z_1,\dots,Z_n]$. Then the polynomial $F$ defined by $F:=aW+P$ is a $(2m+n-1)$-stable coordinate in  $R[Z_{1},\dots, Z_{n},W]~(=R^{[n+1]})$.
\end{thm}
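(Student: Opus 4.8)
The plan is to exploit the non-zerodivisor $a$ by separating the two ``local'' pictures it determines — inverting $a$ and reducing modulo $a$ — and then gluing them, after first absorbing the stabilizing variables so as to reduce to the case $m=0$. For this reduction, recall that since the image $\bar P$ of $P$ in $(R/aR)[Z_1,\dots,Z_n]$ is an $m$-stable coordinate, it is a genuine coordinate in $(R/aR)[Z_1,\dots,Z_n,Y_1,\dots,Y_m]$. Viewing $P$ as an element of $R[Z_1,\dots,Z_n,Y_1,\dots,Y_m]=R^{[n+m]}$ (it does not involve the $Y_j$), its image modulo $a$ is now an honest coordinate. Hence it suffices to prove the following statement, with $N:=n+m$: \emph{if $a$ is a non-zerodivisor of $R$ and the image of $P\in R^{[N]}$ is a coordinate modulo $a$, then $aW+P$ is an $(N-1)$-stable coordinate.} Granting this, $aW+P$ is a coordinate in $R[Z_1,\dots,Z_n,Y_1,\dots,Y_m,W]^{[N-1]}$, i.e. a coordinate in $R[Z_1,\dots,Z_n,W]$ after adjoining $m+(N-1)=2m+n-1$ variables, which is exactly the claim.

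For this base case, write $S:=R/aR$ and let $\bar\chi\in\mathrm{Aut}_S(S[Z_1,\dots,Z_N])$ satisfy $\bar\chi(\bar P)=Z_1$. Introduce $N$ new variables $U_1,\dots,U_N$, one of which plays the role of $W$ (say $U_1=W$), and work in the doubled ring $R[Z_1,\dots,Z_N,U_1,\dots,U_N]=R^{[2N]}$; note this is $R[Z_1,\dots,Z_N,W]$ with exactly $N-1$ variables adjoined. The reason for doubling is liftability: over the arbitrary ring $S$ the block automorphism $(\bar\chi,\bar\chi^{-1})$ of $S^{[2N]}$ is a finite product of elementary automorphisms (shears fixing all but one variable) and permutations of the variables, and each such factor lifts to an $R$-automorphism by lifting its defining polynomial arbitrarily. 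Let $X\in\mathrm{Aut}_R(R^{[2N]})$ be the product of these lifts, so $X\equiv(\bar\chi,\bar\chi^{-1})\pmod a$. Since $P$ involves only the $Z_i$, we obtain $X(P)\equiv\bar\chi(\bar P)=Z_1\pmod a$, say $X(P)=Z_1+a\theta$.

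Applying $X$ to $F=aW+P$ then gives $X(F)=aX(W)+X(P)=Z_1+a\bigl(X(W)+\theta\bigr)$, so $X$ has brought $F$ to the shape $Z_1+a\,g$. One finishes with the elementary observation that $Z_1+a\,g$ is a coordinate as soon as $g$ completes $Z_1$ to a coordinate system: if $R^{[2N]}=R[Z_1,g,\dots]$, then also $R^{[2N]}=R[Z_1+a g,\,g,\dots]$, because $Z_1=(Z_1+a g)-a g$. Hence $X(F)$, and therefore $F$, is a coordinate, proving the base case and with it the theorem.

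The main obstacle is precisely this last gluing step: one must arrange the lift $X$ so that the companion term $g=X(W)+\theta$ is genuinely a coordinate completing $Z_1$ — equivalently, so that the residual automorphism carrying $\bar P$ to $Z_1$ is realized over $R$ while simultaneously controlling the image of the $W$-slot — and to do this using only the $N-1$ extra variables rather than a naive $N$. This is where the form $aW+P$, linear in $W$, is decisive: the factor $a$ lets one absorb the error $\theta$ and the excess of the doubling into the $W$-direction by a single elementary shear of the type $W\mapsto W-\theta$, reducing $X(F)$ all the way to $Z_1+aW$. The two consistency checks that such a construction must respect are the local pictures noted at the outset — inverting $a$ makes $\{F,Z_1,\dots,Z_N\}$ a coordinate system of $R_a^{[N+1]}$ (solve $W=(F-P)/a$), while reduction modulo $a$ returns the coordinate $\bar P$ — and the stabilization by $N-1$ variables is exactly what is needed to interpolate between these pictures through elementary, hence liftable, moves.
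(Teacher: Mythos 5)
First, note that the paper does not prove Theorem \ref{essen stable} at all: it is quoted from \cite[Proposition 5.3]{BBE}, in the form observed by Kahoui and Ouali in \cite{K}, so your argument has to stand on its own. Your opening reduction is correct and is indeed exactly how the stated version follows from the case $m=0$: viewing $P$ inside $R^{[N]}$ with $N=n+m$, the count $m+(N-1)=2m+n-1$ comes out right, so everything hinges on the base case that $aW+P$ is an $(N-1)$-stable coordinate when $\overline{P}$ is an honest coordinate modulo $a$. It is there that the proposal has two genuine gaps. The first is the ``doubling lemma'': you assert that for an \emph{arbitrary} ring $S=R/aR$ and an \emph{arbitrary} $S$-automorphism $\overline{\chi}$ of $S^{[N]}$, the block automorphism $(\overline{\chi},\overline{\chi}^{-1})$ of $S^{[2N]}$ is a finite product of elementary automorphisms and permutations. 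This is not a routine fact. The matrix identity behind the Whitehead lemma, $\operatorname{diag}(M,M^{-1})=E_{12}(M)\,E_{21}(-M^{-1})\,E_{12}(M)\,J$ with $J$ a signed swap, uses the additivity of $M$ at every cancellation (one needs identities of the shape $g(u+f(z))-g(u)=z$, which fail for nonlinear $f,g$), and the obvious repair fails as well: conjugating a block shear $Z_j\mapsto Z_j+h_j(U)$ by $\overline{\chi}\oplus\mathrm{id}$ is no longer elementary when $\overline{\chi}$ is nonlinear. Assertions of this ``stably elementary'' kind are precisely the hard content of the stable tameness literature and are not available off the shelf over an arbitrary ring; you would have to prove this lemma or replace it.

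Second, and decisively, the gluing step is never carried out --- as you yourself concede (``the main obstacle''). Granting a lift $X$ with $X(P)=Z_1+a\theta$, you get $X(F)=Z_1+a\bigl(X(W)+\theta\bigr)$; but since automorphisms preserve coordinates, $X(F)$ is a coordinate if and only if $F$ is, so nothing is gained until you actually verify that $X(W)+\theta$ completes $Z_1$ to a coordinate system --- which is the whole point. The proposed fix, a shear $W\mapsto W-\theta$, is not legitimate: $\theta$ is the unique polynomial with $a\theta=X(P)-Z_1$ (unique because $a$ is a non-zerodivisor), and in general it involves all $2N$ variables, $W$ included, so $W\mapsto W-\theta$ need not be an automorphism. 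Trying instead a legitimate shear $s\colon W\mapsto W-\mu$ with $\mu$ free of $W$, one computes $(X\circ s)(F)=Z_1+a\bigl(X(W-\mu)+\theta\bigr)$, and the requirement $X(W-\mu)=W-\theta$ forces $\mu=W-X^{-1}(W)+X^{-1}(\theta)$, which again involves $W$: the argument is circular. Note also that your ``elementary observation'' ($Z_1+ag$ is a coordinate once $\{Z_1,g\}$ extends to a coordinate system) makes no use of the factor $a$, a sign that the linearity in $W$ has not actually been exploited where you claim it is decisive. So the proof is incomplete exactly at its crux; in \cite{BBE} this difficulty is resolved by a genuinely different and more careful construction of the stabilizing automorphism, not by an after-the-fact shear.
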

The following result on linear planes over a discrete valuation ring was proved by S.M. Bhatwadekar and A.K. Dutta in \cite[Theorem 3.5]{BhA}.
\begin{thm}\label{dvr}
Let $R$ be a discrete valuation ring with  parameter $\pi$, $K=R[\frac{1}{\pi}]$, $k=\frac{R}{\pi R}$ and $F=aW-b\in{R[Y,Z,W]}~(=R^{[3]})$, where $a(\neq{0}),b\in{R[Y,Z]}$. Suppose that $\dfrac{R[Y,Z,W]}{(F)}=R^{[2]}$. Let, for each $G\in{R[Y,Z,W]}$, $\overline{G}$ denote the image of $G$ in $k[Y,Z,W]$. Then there exists an element $Y_0\in{R[Y,Z]}$ such that $a\in{R[Y_0]}$, $\overline{Y_0}\notin{k}$ and $K[Y,Z]=K[Y_0]^{[1]}$. Moreover, if $dim(k[\overline{F},\overline{Y_0}])=2$, then $F$ is a coordinate in $R[Y,Z,W]$.
\end{thm}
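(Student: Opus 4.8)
The plan is to pass to the quotient $S:=R[Y,Z,W]/(F)\cong R^{[2]}$ and to use that $F$ has degree one in $W$. Since $S$ is a domain in which $a\neq 0$, the image of $W$ is $b/a$, so $S=R[Y,Z][b/a]$ is a birational extension of $R[Y,Z]$ inside $\mathrm{Frac}(R[Y,Z])$; cancelling a common factor I may assume $\gcd(a,b)=1$. The two identities $S[1/a]=R[Y,Z][1/a]$ and $S/aS=(R[Y,Z]/(a,b))[W]$ (the latter because $F\equiv -b\pmod a$) will be used repeatedly.

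First I would produce $Y_0$ on the generic fibre. Inverting $\pi$ gives $S_K=K[Y,Z][b/a]=K^{[2]}$, a birational modification of $\A^2_K$ whose exceptional locus $V(a)\subseteq\mathrm{Spec}\,S_K=\A^2_K$ has coordinate ring $(K[Y,Z]/(a,b))[W]$; as $(a,b)$ has finite colength, this presents $V(a)$ as a disjoint union of affine lines. By the Russell--Sathaye analysis of linear planes (equivalently, the structure theory of $\A^1$-fibrations on $\A^2$, which also pins down the reduced and scheme structure of these lines), they are parallel fibres of a single coordinate, so there is a coordinate $Y_0$ of $K[Y,Z]$ with $a\in K[Y_0]$ and $K[Y,Z]=K[Y_0]^{[1]}$. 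This structural step is the first main obstacle, and the characteristic-free treatment of the straightening is its delicate point.

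Next I would descend $Y_0$ to $R$. After replacing $Y_0$ by $\pi^{-m}(Y_0-c)$ for a suitable $c\in K$ and $m\in\bZ$, I can arrange that $Y_0\in R[Y,Z]$ and that its reduction $\overline{Y_0}$ is a nonconstant element of $k[Y,Z]$, so $\overline{Y_0}\notin k$; only the nonconstancy of a coordinate is used here. Then $\overline{Y_0}$ is transcendental over $k$, and writing $a=\sum_i\gamma_iY_0^{\,i}$ with $\gamma_i\in K$, clearing denominators and reducing modulo $\pi$ forces every $\gamma_i\in R$, since a nontrivial $k$-relation among the powers of $\overline{Y_0}$ is impossible. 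Hence $a\in R[Y_0]$, and all three asserted properties of $Y_0$ hold.

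For the ``moreover'' part I would show that the image $y_0$ of $Y_0$ is a residual coordinate of the plane $S=R^{[2]}$ and then lift. Over $K$ the previous analysis makes $y_0$ a coordinate of $S_K=K^{[2]}$ (the linear-plane structure forces every $Y_0$-fibre to be $\A^1$), while over $k$ one has $S/\pi S=k[Y,Z,W]/(\overline{F})=k^{[2]}$, and the hypothesis $\dim k[\overline{F},\overline{Y_0}]=2$ is exactly the transversality ensuring that the image of $Y_0$ is a coordinate of this special fibre. Thus $y_0$ is a residual coordinate of $S=R^{[2]}$; since $R$ is a discrete valuation ring, $R_{red}=R$ is seminormal, so Theorem~\ref{res-stable} upgrades $y_0$ to a genuine coordinate, $S=R[y_0]^{[1]}$. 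It then remains to lift this trivialization of $S$, through the $W$-degree-one presentation $F=aW-b$ with $a\in R[Y_0]$, to an $R$-coordinate system of $R[Y,Z,W]=R^{[3]}$ containing $F$. I expect this final lift---extracting a coordinate system of $R^{[3]}$ from the coordinate $y_0$ of the plane $S$---to be the second main obstacle, alongside the straightening above.
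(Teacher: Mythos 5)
First, note that the paper itself gives no proof of Theorem \ref{dvr}: it is quoted verbatim from \cite[Theorem 3.5]{BhA}, so your attempt must be measured against that source rather than anything in this text. The first half of your proposal is essentially the correct and standard route: passing to $S=R[Y,Z,W]/(F)=R[Y,Z][b/a]$, invoking the characteristic-free Russell--Sathaye straightening \cite{RS} on the generic fibre to obtain a coordinate $Y_0$ of $K[Y,Z]$ with $a\in K[Y_0]$, and then the $\pi$-adic normalization $Y_0\mapsto \pi^{-m}(Y_0-c)$ followed by the transcendence-of-$\overline{Y_0}$ argument to force $a\in R[Y_0]$. That descent argument is clean and correct, and citing \cite{RS} as a black box there is legitimate, since that is exactly how the original proof proceeds.

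The ``moreover'' part, however, contains a genuine gap beyond the two steps you flag. Your pivot is the claim that $\dim k[\overline{F},\overline{Y_0}]=2$ ``is exactly the transversality ensuring'' that the image of $Y_0$ is a coordinate of the special fibre $S/\pi S=k[Y,Z,W]/(\overline{F})\cong k^{[2]}$. But the hypothesis only gives algebraic independence of $\overline{F}$ and $\overline{Y_0}$, i.e., that $\overline{y_0}$ is a \emph{nonconstant} element of this plane; since no assumption is made on the characteristic of $k$, nonconstancy (even together with good fibre behaviour) does not make an element a coordinate --- the Segre--Nagata wild lines show that ``coordinate on each fibre'' phenomena fail badly in characteristic $p$, which lies squarely within the theorem's scope. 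So the assertion ``$y_0$ is a residual coordinate of $S$'' is a missing idea, not an omitted verification, and without it the appeal to Theorem \ref{res-stable} is blocked. Moreover, even granting $S=R[y_0]^{[1]}$, this is only a refined form of the hypothesis $S=R^{[2]}$: concluding that $F$ itself is a coordinate of $R[Y,Z,W]$ from the structure of the quotient is an Abhyankar--Sathaye-type assertion that is false without using the linearity $F=aW-b$ and the containment $a\in R[Y_0]$ directly (for instance via $R[Y,Z,W][1/a]=R[Y,Z][1/a][F]$ and a patching argument over $V(a)$ and $V(\pi)$). You explicitly concede this final lift as an ``obstacle'' and propose no mechanism for it, so the second half of your proposal does not amount to a proof; for the actual argument one must consult \cite{BhA}.
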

Now, we define $\A^{r}$-fibration and state a theorem of A. Sathaye (\cite[Theorem 1]{S}) on the triviality of $\A^2$-fibration over a discrete valuation ring containing $\bQ$.
\begin{defn}
{\em An $R$-algebra $A$ is said to be an {\it $\A^{r}$-fibration over $R$} if the following hold:
\begin{enumerate}
\item[\rm(i)] $A$ is finitely generated over $R$,
\item[\rm(ii)] $A$ is flat over $R$, 
\item[\rm(iii)] $A\otimes_{R}{k(\p)}=k(\p)^{[r]}$, for each prime ideal $\p$ of $R$.
\end{enumerate}}
\end{defn}

\begin{thm}\label{Sathaye}
Let $R$ be a discrete valuation ring containing $\bQ$. Let $A$ be an $\A^2$-fibration over $R$. Then $A=R^{[2]}$.
\end{thm}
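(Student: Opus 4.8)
The plan is to exploit that a DVR $R$ has only the two primes $(0)$ and $(\pi)$, where $\pi$ is a uniformizer, so the $\A^2$-fibration hypothesis collapses to two conditions: for the generic fiber $A_K := A\otimes_R K = K^{[2]}$ and for the special fiber $\overline{A} := A/\pi A = A\otimes_R k = k^{[2]}$, where $K = R[\tfrac1\pi]$ and $k = R/\pi R$. The entire problem is to glue these two polynomial structures across the single prime $\pi$.

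First I would pin down the ring-theoretic structure of $A$. Flatness over $R$ makes $\pi$ a nonzerodivisor; since $A[\tfrac1\pi] = A_K$ and $A/\pi A$ are both domains, $A$ is a domain. Being flat over the normal base $R$ with (geometrically) normal fibers $K^{[2]}$ and $k^{[2]}$, $A$ is a normal noetherian domain (in particular Krull), and $\pi$ stays prime because $A/\pi A$ is a domain. As $A[\tfrac1\pi] = K^{[2]}$ is a UFD, Nagata's lemma forces $A$ itself to be a UFD. I would also record $A^\times = R^\times$, since the units of a polynomial ring over a field are just the base units, read off from each fiber.

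Next I would search for a single element $f\in A$ that is simultaneously a coordinate on the special fiber (so $\overline f$ is a variable of $k^{[2]}$) and on the generic fiber (so $f$ is a variable of $K^{[2]}$). Beginning from any variable of $\overline A$ and lifting it to $A$, the task is to correct the lift---using the UFD property and the room afforded by $\bQ\subseteq R$---so that $f$ also rectifies generically; here the Abhyankar--Moh--Suzuki epimorphism theorem is the engine that makes an embedded line in the plane over a characteristic-zero field rectifiable. With such an $f$ fixed I would set $B := R[f]$, verify $B = R^{[1]}$, and show that $A$ is an $\A^1$-fibration over $B$; then I would invoke the structure theory of $\A^1$-fibrations (Bass--Connell--Wright, Sathaye), by which a finitely generated flat $\A^1$-fibration over a normal noetherian base is a line bundle. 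Since every finitely generated projective module over $B$ is free here (the relevant Picard obstruction vanishes), this gives $A = B^{[1]} = R^{[2]}$.

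The crux, and the step I expect to be the main obstacle, is exactly this coordinate-matching together with the triviality of the resulting fibration: keeping the chosen element a variable on the special fiber while controlling its behaviour generically is delicate, and it is precisely here that $\bQ\subseteq R$ is indispensable, through AMS rectifiability and through the vanishing of obstructions that genuinely fail in positive characteristic. If the direct gluing resists, a technically cleaner route is to reduce first to a \emph{complete} DVR by faithfully flat descent of the polynomial-algebra structure along $R\to\widehat R$, and then to build the coordinates by a deformation-theoretic lift of the special-fiber variables, using that the characteristic-zero automorphism group of $k^{[2]}$ is tame---again by AMS---so that the lift is unobstructed.
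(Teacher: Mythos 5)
The paper does not prove this statement: Theorem \ref{Sathaye} is quoted as a known input from Sathaye's paper \cite{S}, so there is no internal proof to compare against, and your attempt has to be judged as a proof of Sathaye's theorem itself. Your preliminary reductions are correct: $A$ is a Noetherian normal domain, $\pi$ stays prime in $A$, Nagata's lemma makes $A$ a UFD, and $A^{\times}=R^{\times}$. The endgame is also sound in outline: once one has $f\in A$ whose images in both $A\otimes_{R}K=K^{[2]}$ and $A/\pi A=k^{[2]}$ are variables, the fibrewise flatness criterion makes $A$ an $\A^1$-fibration over $B:=R[f]=R^{[1]}$; by Kambayashi--Miyanishi together with Theorem \ref{eq: local-global} this exhibits $A$ as $Sym_{B}(I)$ for an invertible $B$-module $I$, and since $B$ is a UFD the module is free, giving $A=B^{[1]}=R^{[2]}$.

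The genuine gap is that the step you yourself flag as ``the crux'' is not a step of a proof --- it \emph{is} the theorem, and your proposal supplies no mechanism for it. ``Begin from a variable of $\overline{A}$, lift it, and correct the lift using the UFD property and AMS'' does not work as stated: an arbitrary lift of a special-fibre variable has no reason to behave well in the generic fibre, and the Abhyankar--Moh--Suzuki theorem only rectifies an element once you already know that its zero locus in the relevant plane is a closed embedding of $\A^1$; establishing this for a corrected lift, while simultaneously preserving the special-fibre property, is precisely the hard content of \cite{S}, where it is carried out via Abhyankar--Moh expansion techniques (approximate roots, the semigroup theorem) and a delicate valuation-theoretic analysis of the reduction mod $\pi$ of a generic variable. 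Your fallback route has the same character: tameness of the automorphism group of $k^{[2]}$ is Jung--van der Kulk, not AMS; the claimed ``unobstructed'' deformation-theoretic lift of special-fibre variables over $\widehat{R}$ is asserted, not argued; and descent of the polynomial-algebra structure along $R\to\widehat{R}$ is itself a nontrivial statement requiring proof. In sum, you have correctly reduced the theorem to its known hard core (existence of a residual variable) and handled the soft part around it, but the core argument is missing.
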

Next, we define exponential maps and record a basic result.
\begin{defn}\label{exp map}
{\em Let $k$ be a field of arbitrary characteristic, $R$ a $k$-algebra and $A$ be an $R$-algebra. Let $\delta:A\longrightarrow{A^{[1]}}$ be an $R$-algebra homomorphism. We write 
$\delta=\delta_{W}:A\longrightarrow{A[W]}$ if we wish to emphasize an indeterminate $W$. We say $\delta$ is an $R$-linear {\it exponential map} if
\begin{enumerate}
\item[\rm(i)] ${\epsilon_{0}}{\delta_{W}}$ is the identity map on $A$, where ${\epsilon_{0}}:A[W]\longrightarrow{A}$ is the $A$-algebra homomorphism defined by $\epsilon_{0}(W)=0$.
\item[\rm(ii)] ${\delta_{U}}{\delta_{W}}=\delta_{U+W}$, where $\delta_{U}$ is extended to a homomorphism of
$A[W]$ into $A[U,W]$ by setting ${\delta_{U}}(W)=W$.
\end{enumerate}}
\end{defn}

\noindent
For an exponential map $\delta : A\longrightarrow{A^{[1]}}$, we get a sequence of maps ${\delta}^{(i)}:A\longrightarrow{A}$ as follows: for    
$a\in{A}$, set  ${\delta}^{(i)}(a)$ to be the coefficient of $W^i$ in $\delta_{W}{(a)}$, i.e., for $\delta_{W}:A\longrightarrow{A[W]}$, we have   
$$\delta_{W}{(a)}=\sum{\delta^{(i)}(a)W^{i}}.$$
Note that since $\delta_{W}(a)$ is an element in $A[W]$, the sequence ${\lbrace\delta^{(i)}(a)\rbrace}_{i\geq{0}}$ has only finitely many nonzero elements for each $a\in{A}$. Since $\delta_{W}$ is a ring homomorphism, we see that ${\delta}^{(i)}:A\longrightarrow{A}$ is linear for each $i$ and that the Leibnitz Rule:
 ${\delta}^{(n)}(ab)=\underset{i+j=n}{\sum}{\delta^{(i)}(a)\delta^{(j)}}(b)$ holds for all $n$ and for all $a,b\in{A}$. 

\medskip
\noindent 
 The above properties (i) and (ii) of the exponential map $\delta_W$ translate into the following
 properties: 
 \begin{enumerate}
 \item[\rm(i)$^{\prime}$] $\delta^{(0)}$ is the identity map on $A$.
 \item[\rm(ii)$^{\prime}$] The ``iterative property'' $\delta^{(i)}\delta^{(j)}={\binom{i+j}{j}}\delta^{(i+j)}$ holds for all $i,j\geq{0}$.
\end{enumerate}
The following result on exponential maps can be deduced from properties (ii) and (i)$'$ stated above.
\begin{prop}\label{exp-auto}
Let $k$ be a field of arbitrary characteristic, $R$ a $k$-algebra and $A$ an $R$-algebra. Let $\delta_{W}:A\longrightarrow {A[W]}$ be an $R$-linear exponential map and  ${\lbrace\delta^{(i)}\rbrace}_{i\geq{0}}$ the sequence of maps on $A$ defined above. Then the extension of $\delta_{W}$ to $\widetilde{\delta_W}:A[W]\longrightarrow {A[W]}$ defined by setting $\widetilde{\delta_W}(W)=W$, is an $R[W]$-automorphism of $A[W]$ with inverse given by the sequence ${\lbrace(-1)^{i}\delta^{(i)}\rbrace}_{i\geq{0}}$.
\end{prop}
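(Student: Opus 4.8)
The plan is to produce an explicit two-sided inverse for $\widetilde{\delta_W}$ and then verify that both composites are the identity by testing only on a generating set, so that the whole argument collapses onto the two recorded facts (i)$'$ and (ii). First I would record that $\widetilde{\delta_W}$ is well defined: since $\delta_W:A\to A[W]$ is an $R$-algebra homomorphism and we impose $\widetilde{\delta_W}(W)=W$, the universal property of $A[W]=A\otimes_R R[W]$ yields an $R[W]$-algebra endomorphism $\widetilde{\delta_W}$ of $A[W]$ which fixes $R[W]$ and restricts to $\delta_W$ on $A$. This is the routine but necessary first observation.

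Next I would write down the candidate inverse and interpret the phrase ``inverse given by the sequence $\{(-1)^i\delta^{(i)}\}_{i\ge 0}$''. Let $\mu_W:A\to A[W]$ be the $R$-algebra homomorphism obtained from $\delta_U$ by the substitution $U\mapsto -W$; concretely $\mu_W(a)=\sum_i(-1)^i\delta^{(i)}(a)\,W^i=\delta_{-W}(a)$, so $\mu_W$ is precisely the map determined by the sequence $\{(-1)^i\delta^{(i)}\}_{i\ge 0}$. As before it extends to an $R[W]$-algebra endomorphism $\widetilde{\mu_W}$ of $A[W]$ fixing $W$. The key structural reduction is then: both $\widetilde{\mu_W}\widetilde{\delta_W}$ and $\widetilde{\delta_W}\widetilde{\mu_W}$ are $R[W]$-algebra endomorphisms of $A[W]$ that fix $W$, so to prove each equals the identity it suffices to check it restricts to the identity on $A$.

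For the restriction to $A$ I would invoke property (ii). Viewing $\delta_U\delta_W=\delta_{U+W}$ as maps $A\to A[U,W]$ and applying the $A[W]$-algebra specialization $U\mapsto -W$, the right-hand side becomes $\sum_i\delta^{(i)}(a)\,0^i=\delta^{(0)}(a)=a$ by (i)$'$, while the left-hand side becomes exactly $(\widetilde{\mu_W}\widetilde{\delta_W})(a)$; hence $\widetilde{\mu_W}\widetilde{\delta_W}=\mathrm{id}_{A[W]}$. The opposite composite follows symmetrically: relabelling the two fresh variables gives $\delta_W\delta_U=\delta_{W+U}=\delta_{U+W}$, and the same specialization yields $\widetilde{\delta_W}\widetilde{\mu_W}=\mathrm{id}_{A[W]}$. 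Equivalently, one may argue coefficient-wise, noting that the coefficient of $W^n$ in the composite applied to $a\in A$ is $\big(\sum_{i+j=n}(-1)^i\delta^{(i)}\delta^{(j)}\big)(a)=\delta^{(n)}(a)\sum_{i=0}^n(-1)^i\binom{n}{i}=\delta^{(n)}(a)\,0^n$ by the iterative property (ii)$'$, which is $a$ for $n=0$ and $0$ otherwise.

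I do not expect a genuine obstacle here; the single point requiring care is the bookkeeping around the substitution $U\mapsto -W$ — one must check that the specialization map $A[U,W]\to A[W]$ really intertwines $\delta_U$ with $\mu_W$ and commutes with $\widetilde{\delta_W}$ on $A$. Reducing everything to the generating set $A\cup\{W\}$, rather than manipulating arbitrary elements of $A[W]$, is precisely what keeps this bookkeeping transparent and the proof short.
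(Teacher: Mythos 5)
Your proof is correct, and it is essentially the argument the paper intends: the paper states the proposition without a written proof, remarking only that it ``can be deduced from properties (ii) and (i)$'$'', and your specialization $U\mapsto -W$ in $\delta_U\delta_W=\delta_{U+W}$, together with the reduction to the generating set $A\cup\{W\}$, is exactly that deduction (your coefficient-wise check via (ii)$'$ and $\sum_{i}(-1)^i\binom{n}{i}=0$ is the same computation written out). Nothing is missing; in particular you correctly handle the one point of care, that $\mu_W=\delta_{-W}$ is a ring homomorphism and that the substitution map $A[U,W]\to A[W]$ intertwines the two composites with $\widetilde{\mu_W}\widetilde{\delta_W}$ and $\widetilde{\delta_W}\widetilde{\mu_W}$.
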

Next, we quote some famous results which will be needed later in this paper. First, we state Bass's cancellation theorem (\cite[Theorem 9.3]{B}).
\begin{thm}\label{Bass Cancellation}
Let $R$ be a Noetherian $d$-dimensional ring and $Q$ be a finitely generated projective $R$-module whose rank at each localization at a prime ideal is at least $d+1$. Let $M$ be a finitely generated projective $R$-module such that $M\oplus{Q}\cong{M\oplus{N}}$ for some $R$-module $N$. Then $Q\cong{N}$.  
\end{thm}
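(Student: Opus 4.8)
The plan is to derive the cancellation from two classical facts that are both governed by rank: Bass's stable range estimate and the transitivity of the automorphism group on unimodular elements. First I would note that $N$ is automatically finitely generated projective, being a direct summand of $M\oplus N\cong M\oplus Q$, which is finitely generated projective. Comparing local ranks in this isomorphism gives $\operatorname{rank}_{\p}(M)+\operatorname{rank}_{\p}(Q)=\operatorname{rank}_{\p}(M)+\operatorname{rank}_{\p}(N)$, whence $\operatorname{rank}_{\p}(N)=\operatorname{rank}_{\p}(Q)\geq d+1$ for every prime $\p$. Thus both surviving modules have rank $\geq d+1$ everywhere, whereas the module $M$ that is to be cancelled is arbitrary.

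Next I would reduce the cancellation of the arbitrary projective $M$ to cancellation of free modules. Choosing $M'$ with $M\oplus M'\cong R^{t}$ and adding $M'$ to both sides of $M\oplus Q\cong M\oplus N$, I obtain $R^{t}\oplus Q\cong R^{t}\oplus N$. It then suffices to strip off the free summands one copy of $R$ at a time, by means of the single-step lemma: if $P,P'$ are finitely generated projective with $\operatorname{rank}_{\p}(P')\geq d+1$ for all $\p$ and $R\oplus P\cong R\oplus P'$, then $P\cong P'$. Applying this lemma to $R\oplus(R^{t-1}\oplus Q)\cong R\oplus(R^{t-1}\oplus N)$ and descending step by step down to $R\oplus Q\cong R\oplus N$ then yields $Q\cong N$; because $\operatorname{rank}_{\p}(Q)\geq d+1$, the surviving module has rank $\geq d+1$ at every stage, so the lemma applies throughout.

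For the single-step lemma I would argue as follows. Given an isomorphism $\phi\colon R\oplus P\to R\oplus P'$, the element $u:=\phi(1,0)$ is a unimodular element of $R\oplus P'$. The crux is Bass's theorem that, once $\operatorname{sr}(R)\leq d+1$ and $\operatorname{rank}_{\p}(P')\geq d+1$ for all $\p$, the group $\operatorname{Aut}(R\oplus P')$ acts transitively on the set $\operatorname{Um}(R\oplus P')$ of unimodular elements. Hence there is $\psi\in\operatorname{Aut}(R\oplus P')$ with $\psi(u)=(1,0)$, and then $\psi\circ\phi$ carries the free summand $R\cdot(1,0)$ of $R\oplus P$ isomorphically onto the free summand $R\cdot(1,0)$ of $R\oplus P'$; passing to the complementary summands gives $P\cong P'$.

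The substantive input, and the step I expect to be the main obstacle, is exactly this transitivity of $\operatorname{Aut}(R\oplus P')$ on unimodular elements, which is where the Noetherian hypothesis and the rank bound are genuinely used. It rests in turn on Bass's stable range theorem, $\operatorname{sr}(R)\leq\dim R+1$ for Noetherian $R$, proved by induction on $\dim R$ using prime avoidance to shrink the support of the ideal generated by a given unimodular row. The bound $d+1$ is sharp: over the coordinate ring $R$ of the real $2$-sphere (so $d=2$) the tangent module $T$ satisfies $R\oplus T\cong R\oplus R^{2}$ with $T\not\cong R^{2}$ and $\operatorname{rank}T=2=d$, showing that a rank-$d$ survivor cannot in general be cancelled.
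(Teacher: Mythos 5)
The paper does not actually prove this statement: it is quoted verbatim as Bass's cancellation theorem with a citation to \cite[Theorem 9.3]{B}, so there is no internal proof to compare against. Judged on its own, your outline is the standard proof of Bass's theorem (as in Bass's original paper and in the textbook treatments of Lam and Ischebeck--Rao), and its reductions are all correct: $N$ is finitely generated projective as a summand of $M\oplus Q$, and comparing local ranks gives $\operatorname{rank}_{\p}(N)=\operatorname{rank}_{\p}(Q)\geq d+1$; adding a complement $M'$ with $M\oplus M'\cong R^t$ reduces to cancelling free summands; and the one-step lemma applies at every stage since the surviving module always contains $N$ (or $Q$) as a direct summand, hence keeps rank $\geq d+1$. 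The splitting argument --- composing with an automorphism $\psi$ carrying the unimodular element $\phi(1,0)$ to $(1,0)$ and passing to quotients by $R\cdot(1,0)$ --- is likewise sound.

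One caveat on the final paragraph, where you sketch the ``crux'': attributing the transitivity of $\operatorname{Aut}(R\oplus P')$ on $\operatorname{Um}(R\oplus P')$ to the stable range bound $\operatorname{sr}(R)\leq d+1$ alone is not accurate. Stable range controls unimodular rows in \emph{free} modules; it yields transitivity (hence cancellation) when $P'$ has a free direct summand of rank at least $\operatorname{sr}(R)$, but a projective module of rank $\geq d+1$ need not have such a summand --- Serre's splitting theorem guarantees only a single free summand once the rank exceeds $d$, which is short of the $d+1$ free ranks that a pure stable-range argument would require. What the transitivity theorem actually rests on is Bass's basic-element theorem: a Serre-type prime-avoidance induction applied to the module $P'$ itself, using that $\operatorname{Max}(R)$ is a Noetherian space of dimension $\leq d$ and $\operatorname{rank}_{\p}(P')\geq d+1$, which given unimodular $(a,p)\in R\oplus P'$ produces $q\in P'$ with $p+aq$ unimodular in $P'$; only then do transvections finish. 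Since you invoke the transitivity theorem as a quoted black box --- exactly as the paper quotes the full cancellation theorem --- this misstatement of its provenance does not break your argument, but the weight is carried by the basic-element step, not by $\operatorname{sr}(R)\leq d+1$. Your closing example (the tangent module $T$ of the real $2$-sphere, with $R\oplus T\cong R\oplus R^{2}$, $T\not\cong R^{2}$, $\operatorname{rank} T=2=d$) is correct and does show the rank hypothesis $d+1$ is sharp.
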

Now, we state Quillen's local-global theorem (\cite[Theorem 1]{Q}).
\begin{thm}\label{extended}
Let $R$ be a Noetherian ring, $D=R^{[1]}$ and $M$ be a finitely generated $D$-module. Suppose, for each maximal ideal $\m$ of $R$, $M_{\m}$ is extended from $R_{\m}$. Then $M$ is extended from $R$. 
\end{thm}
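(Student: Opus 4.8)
The plan is to establish this by the \emph{Quillen patching} argument, for which the Noetherian hypothesis is used precisely to guarantee that $M$ is finitely \emph{presented} over $D=R[T]$ (writing $T$ for the variable, so $D=R^{[1]}$). The natural candidate for the ground module is $M_{0}:=M/TM$, regarded as an $R$-module, and the goal becomes the single isomorphism $M\cong M_{0}\otimes_{R}D$. First I would record that if $M_{f}$ is extended from $R_{f}$ for some $f\in R$, then the $R_{f}$-module it descends from is forced to be $(M_{f})/T(M_{f})=(M_{0})_{f}$; hence being extended can always be tested against this fixed candidate. I would then introduce the ``extension set''
$$J:=\{\,f\in R : M_{f}\ \text{is extended from}\ R_{f}\,\},$$
and the entire theorem reduces to two claims: (a) $J$ is an ideal of $R$, and (b) $J$ is contained in no maximal ideal, so that $J=R$ and $1\in J$ yields $M=M_{1}$ extended from $R$.

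Claim (a) is the technical core. Stability of $J$ under multiplication by $R$ is immediate, since localizing an extended module again yields an extended module. Closure under addition I would reduce, by passing from $R$ to $R_{f+g}$ (where the images of $f$ and $g$ become comaximal), to the \emph{comaximal} case: if $Rf+Rg=R$ and both $M_{f}$ and $M_{g}$ are extended, then $M$ is extended. Choosing isomorphisms $\phi_{f}:M_{f}\xrightarrow{\sim}(M_{0})_{f}[T]$ and $\phi_{g}:M_{g}\xrightarrow{\sim}(M_{0})_{g}[T]$ normalized to be the identity at $T=0$, their comparison on the overlap produces an automorphism $\theta:=\phi_{f}\phi_{g}^{-1}$ of $(M_{0})_{fg}[T]$ with $\theta(0)=\mathrm{id}$. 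The crux is Quillen's descent lemma for such automorphisms: since $M_{0}$ is finitely presented, $\theta$ is given by matrices whose denominators are bounded powers of $f$ and $g$, so there is an integer $N$ with the property that $a\equiv b\ (\mathrm{mod}\ f^{N})$ forces $\theta(aT)\theta(bT)^{-1}$ to descend to an automorphism over $R_{g}[T]$, and symmetrically (mod $g^{N}$ over $R_{f}[T]$).

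Using comaximality $Rf^{N}+Rg^{N}=R$, I would write $1=uf^{N}+vg^{N}$ and factor $\theta(T)=\alpha\cdot\beta$, where $\alpha:=\theta(T)\theta(uf^{N}T)^{-1}$ descends over $R_{f}[T]$ (the difference of the arguments being $vg^{N}$) and $\beta:=\theta(uf^{N}T)$ descends over $R_{g}[T]$, both equal to the identity at $T=0$. Then $\alpha^{-1}\phi_{f}$ and $\beta\phi_{g}$ agree on the overlap, so by the patching property of modules over the comaximal cover $\{D(f),D(g)\}$ of $\mathrm{Spec}\,R$ they glue to a global isomorphism $M\cong M_{0}\otimes_{R}D$, proving the comaximal case and hence (a). For claim (b), finite presentation again lets an isomorphism $M_{\m}\cong(M_{0})_{\m}[T]$ over $R_{\m}[T]$ spread out to some $R_{f}[T]$ with $f\notin\m$ (the kernel and cokernel of a globalizing map are finitely generated and vanish at $\m$, hence on a neighborhood), giving $f\in J\smallsetminus\m$; thus no maximal ideal contains $J$, forcing $J=R$. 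I expect the genuine obstacle to be the descent lemma of the previous paragraph --- the explicit denominator estimate that makes the local factorization of $\theta$ descend to the unlocalized rings --- while the reductions and the final gluing are formal.
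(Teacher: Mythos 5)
The paper does not actually prove this statement: Theorem \ref{extended} is quoted as a black box, with a citation to Quillen's original paper \cite{Q}, so there is no in-paper argument to compare yours against. What you have written is, in substance, a faithful reconstruction of Quillen's original patching proof, and it is correctly assembled: the identification of the forced candidate $M_0=M/TM$; the Quillen ideal $J$, with closure under addition reduced to the comaximal case by localizing at $f+g$; the normalization $\phi_f(0)=\mathrm{id}=\phi_g(0)$ so that $\theta=\phi_f\phi_g^{-1}$ satisfies $\theta(0)=\mathrm{id}$; the factorization $\theta(T)=\bigl[\theta(T)\theta(uf^NT)^{-1}\bigr]\cdot\bigl[\theta(uf^NT)\bigr]$ from $1=uf^N+vg^N$, with the congruence directions handled correctly ($1\equiv uf^N \pmod{g^N}$ gives descent of $\alpha$ over $R_f[T]$, and $uf^N\equiv 0\pmod{f^N}$ gives descent of $\beta$ over $R_g[T]$); the gluing over the cover $\{D(f),D(g)\}$; and the spreading-out step for claim (b), where the Noetherian hypothesis indeed enters only through finite presentation (of $M$, and of the kernel of the globalized map).

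The one place your sketch is thinner than the genuine proof is exactly the spot you flag: the descent lemma. ``The matrices of $\theta$ have denominators that are bounded powers of $f$'' does not by itself produce an $N$ that works \emph{uniformly} in the pair $(a,b)$, which is what the factorization step consumes. Quillen gets the uniformity by a two-variable substitution trick: one considers $E(X,Y):=\theta((X+Y)T)\,\theta(YT)^{-1}$ as an automorphism of $(M_0)_{fg}[T]\otimes R_{fg}[X,Y]$; since $E(0,Y)=\mathrm{id}$, one can write $E=\mathrm{id}+X\cdot(\text{finitely many endomorphism coefficients})$, the finitely many coefficients having bounded $f$-denominators because $\mathrm{End}$ of a finitely presented module commutes with localization; choosing $N$ to clear those denominators and then specializing $X=a-b\in f^NR$, $Y=b$ yields the descent over $R_g[T]$ (and the same applied to $E^{-1}$ handles invertibility of the descended map). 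With that standard lemma granted --- and you correctly isolate it as the technical core rather than claiming it --- your argument is complete and is precisely the one the paper implicitly relies on via \cite{Q}.
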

For convenience, we record a basic result on symmetric algebras of finitely generated modules (\cite[Lemma 1.3]{Ea}).
\begin{lem}\label{symm}
Let $R$ be a ring and $M,N$ two finitely generated $R$-modules. If $Sym_{R}(M)$ and ${Sym_{R}(N)}$ denote the respective symmetric algebras then $Sym_{R}(M)\cong{Sym_{R}(N)}$ as $R$-algebras if and only if $M\cong{N}$ as $R$-modules.
\end{lem}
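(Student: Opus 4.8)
The forward implication is immediate and I would dispose of it first: since $M \mapsto \operatorname{Sym}_R(M)$ is a functor, any $R$-module isomorphism $M \cong N$ induces an isomorphism of (graded) $R$-algebras $\operatorname{Sym}_R(M) \cong \operatorname{Sym}_R(N)$. The substance is the converse, and my plan is as follows.

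Write $S = \operatorname{Sym}_R(M)$ and $T = \operatorname{Sym}_R(N)$, graded by $S = \bigoplus_{n \ge 0} S_n$ with $S_0 = R$, $S_1 = M$, and similarly for $T$. Let $\mathfrak{a} = \bigoplus_{n \ge 1} S_n$ and $\mathfrak{b} = \bigoplus_{n \ge 1} T_n$ be the augmentation ideals, with augmentation maps $\epsilon_S \colon S \to R$ and $\epsilon_T \colon T \to R$. The starting point is the elementary observation that a product of two positive-degree elements has degree at least two, whence $\mathfrak{a}/\mathfrak{a}^2 \cong S_1 = M$ and $\mathfrak{b}/\mathfrak{b}^2 \cong T_1 = N$ as $R$-modules. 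Consequently it would be enough to manufacture, from the given $R$-algebra isomorphism, an $R$-algebra isomorphism $S \to T$ that carries $\mathfrak{a}$ onto $\mathfrak{b}$.

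Here lies the main obstacle: a given $R$-algebra isomorphism $\phi \colon S \to T$ need not be graded and need not send $\mathfrak{a}$ into $\mathfrak{b}$ --- already an automorphism $x \mapsto x+1$ of $R[x] = \operatorname{Sym}_R(R)$ displaces the augmentation ideal. To get around this I would correct $\phi$ by a \emph{translation}. Put $u := \epsilon_T \circ \phi|_M \in \operatorname{Hom}_R(M,R)$. By the universal property of the symmetric algebra, the $R$-linear map $m \mapsto m - u(m)$ (with $u(m) \in R \subseteq S$) extends to an $R$-algebra endomorphism $\tau$ of $S$, which is an automorphism with inverse induced by $m \mapsto m + u(m)$. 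Setting $\phi' := \phi \circ \tau$, a short computation gives $\epsilon_T(\phi'(m)) = \epsilon_T(\phi(m)) - u(m) = 0$ for every $m \in M$; thus $\epsilon_T \circ \phi'$ is an $R$-algebra map $S \to R$ vanishing on the generating module $M$, so it coincides with $\epsilon_S$. Taking kernels, $\phi'^{-1}(\mathfrak{b}) = \ker(\epsilon_T \circ \phi') = \ker \epsilon_S = \mathfrak{a}$, i.e.\ $\phi'(\mathfrak{a}) = \mathfrak{b}$.

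To finish, I would observe that $\phi'$, being an $R$-algebra isomorphism sending $\mathfrak{a}$ onto $\mathfrak{b}$, sends $\mathfrak{a}^2$ onto $\mathfrak{b}^2$ and therefore induces an $R$-module isomorphism $\mathfrak{a}/\mathfrak{a}^2 \cong \mathfrak{b}/\mathfrak{b}^2$; combined with the identifications above this gives $M \cong N$. I note in passing that this argument nowhere uses finite generation of $M$ and $N$, so that hypothesis seems to be needed only for the intended applications.
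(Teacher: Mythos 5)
Your proof is correct and needs no repair: the paper itself gives no argument for this lemma, quoting it from Eakin and Heinzer \cite[Lemma 1.3]{Ea}, and your translation trick --- composing the given isomorphism with the automorphism of $Sym_R(M)$ induced by $m \mapsto m - u(m)$, $u = \epsilon_T \circ \phi|_M$, so that it carries augmentation ideal to augmentation ideal, and then reading off $M \cong \mathfrak{a}/\mathfrak{a}^2 \cong \mathfrak{b}/\mathfrak{b}^2 \cong N$ --- is precisely the classical argument behind the cited result. Your closing observation is also accurate: finite generation of $M$ and $N$ plays no role in this proof, and the hypothesis is carried in the statement only because that is how the source states it and how the paper applies it.
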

Finally, we quote the theorem on the triviality of locally polynomial algebras proved by Bass-Connell-Wright (\cite{BCW}) and independently by Suslin (\cite{Su}).
\begin{thm}\label{eq: local-global}
Let $A$ be a finitely presented $R$-algebra. Suppose that for each maximal ideal $\m$ of $R$, the $R_{\m}$-algebra $A_{\m}$ is $R_{\m}$-isomorphic to the symmetric algebra of some $R_{\m}$-module. Then $A$ is $R$-isomorphic to the symmetric algebra of a finitely generated $projective$ $R$-module.
\end{thm}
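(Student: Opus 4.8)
The plan is to reduce to a Noetherian base, to extract a finitely generated projective $R$-module $P$ as a suitable conormal module, and then to check that the resulting comparison map $Sym_R(P)\to A$ is an isomorphism by working locally. First I would use finite presentation to spread out: writing $A=R[Y_1,\dots,Y_N]/(f_1,\dots,f_M)$, only finitely many elements of $R$ occur among the data, so $A\cong A_0\otimes_{R_0}R$ for a finitely generated (hence Noetherian) $\bZ$-subalgebra $R_0\subseteq R$ and a finitely presented $R_0$-algebra $A_0$; after enlarging $R_0$ the local hypothesis is already defined over $R_0$, so I may assume $R$ is Noetherian. Over the local ring $R_\m$ the given module is finitely generated and free, so the local model is $A_\m\cong R_\m^{[r(\m)]}$; since $A$ is finitely presented, such a local isomorphism spreads out to a Zariski neighbourhood of $\m$, whence $r(\m)$ is locally constant. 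Decomposing $\operatorname{Spec}(R)$ into its open-and-closed pieces, I may then assume the rank $r$ is constant.

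The essential point is to produce a global $R$-algebra augmentation $\varepsilon:A\to R$, equivalently a section of $\operatorname{Spec}(A)\to\operatorname{Spec}(R)$ with respect to which $A$ acquires a linear, vector-bundle structure. I expect this to be the main obstacle: the local symmetric-algebra structures furnish sections Zariski-locally, but their transition data lie a priori in the full automorphism group of $R_\m^{[r]}$, which for $r\ge 2$ contains nonlinear automorphisms, so the local sections need not patch and the bundle need not be linear on the nose. I would attack this by induction on $r$. The base case $r=1$ is favourable, since every automorphism of a one-variable polynomial ring $B[t]$ is affine ($t\mapsto ut+v$ with $u$ a unit); hence a rank-one bundle is a torsor under a line bundle, and such a torsor over the affine scheme $\operatorname{Spec}(R)$ is trivial because $H^1$ of a quasi-coherent sheaf vanishes. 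For the inductive step one must split off a single fibre coordinate and linearize the remaining rank $r-1$ bundle compatibly; carrying this out is precisely the delicate mechanism of Bass--Connell--Wright and of Suslin, and it is where the genuine content of the theorem resides.

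Granting a global augmentation $\varepsilon$, the remainder is formal. Put $I=\ker\varepsilon$, so that $A=R\oplus I$ as $R$-modules, and set $P:=I/I^2$. Then $P$ is a finitely generated $R$-module which, after translating the chosen section to the origin in each local model $R_\m^{[r]}$, is free of rank $r$; hence $P$ is finitely generated projective. Because $P$ is projective, the surjection $I\twoheadrightarrow I/I^2=P$ admits an $R$-linear splitting $s:P\to I\subseteq A$, and the universal property of the symmetric algebra promotes $s$ to an $R$-algebra homomorphism $\varphi:Sym_R(P)\to A$. It remains to see that $\varphi$ is an isomorphism, which I would verify after localizing at each $\m$: there $\varphi_\m$ carries a basis of $P_\m$ to elements generating the augmentation ideal of $R_\m^{[r]}$ modulo its square, so graded Nakayama forces these images to generate the ideal and hence $A_\m$, giving surjectivity, while a surjective endomorphism of the Noetherian ring $R_\m^{[r]}$ is automatically injective. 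Thus $\varphi_\m$ is an isomorphism for every maximal ideal $\m$, so $\varphi$ itself is an isomorphism and $A\cong Sym_R(P)$ with $P$ finitely generated projective, as required.
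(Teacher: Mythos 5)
This statement is quoted in the paper as a classical theorem, cited to \cite{BCW} and \cite{Su}, with no proof given; so the benchmark is the original Bass--Connell--Wright/Suslin arguments. Measured against that, your attempt has a genuine gap at its center: it is circular. You correctly diagnose the difficulty --- local symmetric-algebra structures give sections of $\operatorname{Spec}(A)\to\operatorname{Spec}(R)$ only Zariski-locally, and for $r\geq 2$ the transition data live in the full (nonlinear) automorphism group of $R_{\m}^{[r]}$, so no cohomological patching is available --- but your proposed induction on $r$ is executed only in the trivial base case $r=1$. For the inductive step you write that it ``is precisely the delicate mechanism of Bass--Connell--Wright and of Suslin,'' i.e., you invoke the theorem being proved. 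What remains established in your text is the Noetherian reduction, the clopen decomposition by rank, and the rank-one case; the entire content of the theorem is deferred.

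Independently, the ``formal'' endgame after granting a global augmentation $\varepsilon$ is incorrect as argued. An arbitrary $R$-linear splitting $s:P\to I$ of $I\twoheadrightarrow I/I^{2}=P$ does \emph{not} in general induce an isomorphism $Sym_{R}(P)\to A$. Concretely: take $R=k$ a field, $A=k[x]$, $\varepsilon(x)=0$, $I=(x)$, $P=k\bar{x}$, and $s(\bar{x})=x+x^{2}$; the resulting map $\varphi:k[y]\to k[x]$, $y\mapsto x+x^{2}$, sends a basis of $P$ to an element generating $I/I^{2}$, yet $k[x+x^{2}]\subsetneq k[x]$, so $\varphi$ is not surjective. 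Your graded-Nakayama step fails because the augmentation ideal $I_{\m}$ is not contained in the Jacobson radical of $A_{\m}\cong R_{\m}^{[r]}$ (e.g.\ the Jacobson radical of $k[x]$ is zero), so ``generates $I/I^{2}$'' does not yield ``generates $I$''; and even ideal-generation of $I$ would not yield generation of $A$ as an $R$-algebra, which is what surjectivity of $\varphi$ requires. What the section honestly gives is an isomorphism $Sym_{R}(P)\cong gr_{I}(A)$ onto the associated graded ring (checked locally in degree one, then globally); transporting this back from $gr_{I}(A)$ to $A$ itself --- equivalently, choosing the splitting $s$ compatibly with the local polynomial structures --- is once more exactly the linearization problem you set aside. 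A smaller point: you silently strengthen the hypothesis to ``$A_{\m}$ is a polynomial $R_{\m}$-algebra'' by taking the local module to be finitely generated free; that is the intended reading (it is how the theorem is used in Lemma \ref{one dimensional cancellation}, and it matches \cite{BCW}), but under the paper's literal wording ``some $R_{\m}$-module'' this step would need justification, since a finitely generated module over a local ring need not be free.
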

\section{Main Results}
In this section we prove our main results. First, we will extend Theorem \ref{maubach} 
(see Theorem \ref{ext-Maubach}).
For convenience, we record below a local-global result.
\begin{lem}\label{one dimensional cancellation}
Let $R$ be a one-dimensional ring, $n\geq{2}$, $A=R[Z_1,\dots,Z_n]~(=R^{[n]})$ and $F\in{A}$. 
Suppose that for each maximal ideal $\m$ of $R$, $A_{\m}=R_{\m}[F]^{[n-1]}$. Then $F$ is a coordinate in $A$.
\end{lem}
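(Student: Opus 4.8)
\noindent The plan is to reduce ``$F$ is a coordinate'' to the freeness of two projective modules and then feed the local data into the local--global theorems recorded above. By Lemma \ref{nilradical} I may assume $R$ is reduced (this changes neither the dimension nor the hypothesis), and I treat $R$ as Noetherian, as in all our applications. Set $D:=R[F]\subseteq A$. First I would show $D=R^{[1]}$. For a maximal ideal $\m$ of $R$ and a minimal prime $\p\subseteq\m$, tensoring $A_{\m}=R_{\m}[F]^{[n-1]}$ gives $k(\p)^{[n]}=A\otimes_{R}k(\p)=k(\p)[\overline F]^{[n-1]}$, which by a dimension count forces $\overline F$ to be transcendental over $k(\p)$; since $R_{\m}$ is reduced, this makes $F$ transcendental over $R_{\m}$, so $D_{\m}=R_{\m}[F]=R_{\m}^{[1]}=Sym_{R_{\m}}(R_{\m})$. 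Applying Theorem \ref{eq: local-global} over $R$ then yields $D\cong Sym_{R}(L)$ for a rank-one projective $R$-module $L$. As $D$ is generated over $R$ by the single element $F$, the augmentation ideal $I:=\ker(D\to R)$ attached under this isomorphism to the projection $Sym_{R}(L)\to R$ is principal; since $I/I^{2}\cong L$, the module $L$ is cyclic, hence free of rank one, so $D=R^{[1]}$ with $F$ a variable.

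\noindent Next I would realise $A$ as a symmetric algebra over $D$. The point is to upgrade the hypothesis from the maximal ideals of $R$ to \emph{all} maximal ideals of $D$. Given a maximal ideal $\M$ of $D$, choose a maximal ideal $\m'$ of $R$ containing $\M\cap R$; then $R\setminus\m'\subseteq D\setminus\M$, so $D_{\M}$ is a localisation of $R_{\m'}[F]$ and hence $A_{\M}=A_{\m'}\otimes_{R_{\m'}[F]}D_{\M}=D_{\M}^{[n-1]}$. Thus $A_{\M}$ is a free symmetric algebra over $D_{\M}$ for every maximal ideal $\M$ of $D$, and Theorem \ref{eq: local-global} over $D$ produces a finitely generated projective $D$-module $Q$ with $A\cong Sym_{D}(Q)$; comparing with $A_{\M}=D_{\M}^{[n-1]}$ through Lemma \ref{symm} shows $Q$ has rank $n-1$.

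\noindent Finally I would prove $Q$ is free. For each maximal ideal $\m$ of $R$, the module $Q\otimes_{R}R_{\m}$ is finitely generated projective over the polynomial ring $R_{\m}[F]$ over the local ring $R_{\m}$, hence free (Quillen--Suslin/Horrocks), so it is extended from $R_{\m}$; since $D=R^{[1]}$, Quillen's theorem \ref{extended} gives $Q\cong D\otimes_{R}Q_{0}$ for a projective $R$-module $Q_{0}$ of rank $n-1$. Then $A\cong Sym_{D}(D\otimes_{R}Q_{0})\cong Sym_{R}(R\oplus Q_{0})$, while $A=R^{[n]}=Sym_{R}(R^{n})$, so by Lemma \ref{symm} we get $R\oplus Q_{0}\cong R^{n}$. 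For $n\geq 3$ the summand $R^{n-1}$ has rank $\geq\dim R+1$, so Theorem \ref{Bass Cancellation} yields $Q_{0}\cong R^{n-1}$; for $n=2$ the module $Q_{0}$ has rank one, whence $Q_{0}\cong\det Q_{0}\cong\det(R\oplus Q_{0})\cong\det(R^{2})\cong R$. In either case $Q$ is free, so $A\cong Sym_{D}(D^{\,n-1})=D^{[n-1]}=R[F]^{[n-1]}$, i.e.\ $F$ is a coordinate in $A$.

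\noindent The step I expect to be the main obstacle is the freeness of $Q$: the rank $n-1$ can be as small as $1$, so Bass cancellation does not apply uniformly, and one must first descend $Q$ to a projective $R$-module via Quillen's theorem—which itself requires having already identified $D$ with $R^{[1]}$—before the rank-one determinant argument or cancellation can close the argument. The other delicate point is transporting the local hypothesis from the maximal ideals of $R$ to those of $D=R^{[1]}$, since a maximal ideal of $D$ need not contract to a maximal ideal of $R$; the localisation argument above is what circumvents this.
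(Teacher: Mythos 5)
Your overall architecture coincides with the paper's: apply Theorem \ref{eq: local-global} over $D:=R[F]$ to get $A\cong Sym_{D}(Q)$, transport the local hypothesis from maximal ideals of $R$ to all maximal ideals of $D$ by contracting to $R$ and localizing (your argument here is the same as the paper's), descend $Q$ to $R$ by Quillen's Theorem \ref{extended}, and finish with Lemma \ref{symm} and cancellation. Two of your refinements are in fact more careful than the paper: the explicit verification that $D\cong R^{[1]}$ (needed before Theorem \ref{extended} can be invoked; the paper leaves it implicit), though your BCW detour is unnecessary---transcendence of $F$ over each $R_{\m}$ globalizes at once, since a relation $\sum a_{i}F^{i}=0$ with $a_{i}\in R$ has all $a_{i}=0$ in every $R_{\m}$, hence $a_{i}=0$; and the separate determinant argument for $n=2$, where the quoted form of Theorem \ref{Bass Cancellation} (local rank at least $d+1=2$) does not literally cover the rank-one case that the paper cites it for.

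There is, however, a genuine gap in your freeness step. You assert that $Q\otimes_{R}R_{\m}$, being finitely generated projective over $R_{\m}[F]\cong R_{\m}^{[1]}$ with $R_{\m}$ local, is free ``(Quillen--Suslin/Horrocks)''. Neither theorem applies: Quillen--Suslin concerns polynomial rings over a field (or a PID), and Horrocks requires freeness after inverting a monic polynomial. The general statement you are invoking---every finitely generated projective module over $R_{\m}[T]$, $R_{\m}$ Noetherian local, is free---is the Bass--Quillen problem, and it is \emph{false} without regularity hypotheses: for $R_{\m}$ a reduced one-dimensional local ring that is not seminormal (e.g.\ $k[t^{2},t^{3}]$ localized at $(t^{2},t^{3})$), one has $\mathrm{Pic}(R_{\m}[T])\neq 0$ by Traverso/Schanuel, so non-free rank-one projectives over $R_{\m}[T]$ exist. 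Since Lemma \ref{one dimensional cancellation} assumes no seminormality (and even in the paper's characteristic-zero applications $R_{red}$ need not be seminormal), the step fails as justified. The repair is one line and is exactly the paper's move: the freeness of $Q_{\m}$ over $D_{\m}=R_{\m}[F]$ is not a general fact about projective modules but comes from the hypothesis, since $Sym_{D_{\m}}(Q_{\m})\cong A_{\m}=R_{\m}[F]^{[n-1]}\cong Sym_{D_{\m}}\bigl((D_{\m})^{n-1}\bigr)$, whence $Q_{\m}\cong (D_{\m})^{n-1}$ by Lemma \ref{symm}---the very lemma you already use twice elsewhere. With that substitution your proof closes and agrees with the paper's.
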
   
\begin{proof}
Let $D:=R[F]$, $\n$ be an arbitrary maximal ideal of $D$, $\p:=\n\cap{R}$ and $\m_0$ a maximal ideal of $R$ such that $\p\subseteq{\m_0}$. From the natural maps $R_{\m_{0}}\longrightarrow{R_{\p}}\longrightarrow{D_{\n}}$, we see that $A_{\p}=R_{\p}[F]^{[n-1]}$ and hence $A_{\n}={D_{\n}}^{[n-1]}$. By Theorem \ref{eq: local-global}, there exists a projective $D$-module $Q'$ of rank $(n-1)$  such that $A\cong{Sym_{D}(Q')}$. Since, for each maximal ideal $\m$ of $R$, $A_{\m}=R_{\m}[F]^{[n-1]} \cong Sym_{D_{\m}}(({D_{\m})}^{n-1})$, by Lemma 2.16, 
 we have $Q'_{\m} \cong ({D_{\m}})^{n-1} \cong  (R_{\m})^{n-1} \otimes_R D $. Thus,    $Q'$ is locally extended from $R$ and hence by Theorem \ref{extended}, $Q'$ is extended from $R$, i.e., there exists a projective $R$-module $Q$ of rank $(n-1)$ such that $Q'=Q\otimes_{R}{D}$. Therefore, $A\cong{{Sym_{R}(Q)}\otimes_{R}{D}}\cong{Sym_{R}(Q)\otimes_{R}{Sym_{R}(R)}}\cong{Sym_{R}(Q\oplus{R})}$. Since $A=R[Z_1,\dots,Z_n]\cong{Sym_{R}(R^{n})}$, by Lemma \ref{symm}, we have $Q\oplus{R}\cong{R^{n}}$. Hence, by Theorem \ref{Bass Cancellation}, $Q$ is a free $R$-module of rank $(n-1)$. Therefore, $Q'$ is a free $D$-module of rank $(n-1)$. Hence, $A=R[F]^{[n-1]}$.
\end{proof}
We now extend Theorem \ref{maubach}.
\begin{thm}\label{ext-Maubach}
Let $k$ be an algebraically closed field and $R$ a one-dimensional affine $k$-algebra. Let $a$ be a non-zerodivisor in $R$ and $P(Z_{1},\dots, Z_{n})\in{R[Z_{1},\dots, Z_{n}]}~(=R^{[n]}$) be such that the image of $P$ is a coordinate in $\dfrac{R}{aR}[Z_{1},\dots, Z_{n}]$. If $R_{red}$ is seminormal or if the characteristic of $k$ is zero  then the polynomial $F$ defined by $F:=aW+P$ is a coordinate in  $R[Z_{1},\dots, Z_{n},W]~(=R^{[n+1]})$.
\end{thm}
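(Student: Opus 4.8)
The plan is to exploit the special linear structure of $F=aW+P$ by recognising it first as a \emph{residual} coordinate and then as a \emph{stable} coordinate, and finally to delete the extra variables by a symmetric‑algebra/cancellation argument over $D:=R[F]$, the genuine work being concentrated in the one‑dimensional local case where the quoted results on discrete valuation rings enter. First I would reduce to $R$ reduced: by Lemma \ref{nilradical} being a coordinate is insensitive to nilpotents, while one‑dimensionality and the standing hypotheses (seminormality of $R_{red}$, respectively $\operatorname{char} k=0$, whence $R\supseteq\bQ$) pass to $R_{red}$. Next I claim $F$ is a residual coordinate in $A:=R[Z_1,\dots,Z_n,W]=R^{[n+1]}$. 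Indeed, for a prime $\mathfrak p$ with $a\notin\mathfrak p$ the image $\bar a$ is a unit in $k(\mathfrak p)$, so $W=\bar a^{-1}(\bar F-\bar P)$ and $k(\mathfrak p)[Z_1,\dots,Z_n,W]=k(\mathfrak p)[\bar F]^{[n]}$; for $a\in\mathfrak p$ we have $\bar F=\bar P$, which is a coordinate in $k(\mathfrak p)[Z_1,\dots,Z_n]$ because $P$ is already a coordinate modulo $a$, whence again $k(\mathfrak p)[Z_1,\dots,Z_n,W]=k(\mathfrak p)[\bar P]^{[n-1]}[W]=k(\mathfrak p)[\bar F]^{[n]}$.

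Since the image of $P$ is a ($0$‑stable) coordinate over $R/aR$, Theorem \ref{essen stable} with $m=0$ shows $F$ is an $(n-1)$‑stable coordinate, i.e. $A[T_1,\dots,T_{n-1}]=D^{[2n-1]}$, where $D=R[F]$ is a genuine polynomial ring $R^{[1]}$ (as $a$ is a non‑zerodivisor, $F$ is transcendental over $R[Z_1,\dots,Z_n]$) and $A$ is finitely generated over $D$. Two consequences follow cheaply over $D$. The stable identity exhibits $A$ as the $T$‑degree‑zero part of the free $D$‑module $A[T_1,\dots,T_{n-1}]=D^{[2n-1]}$, hence as a $D$‑module direct summand of a free module; thus $A$ is $D$‑projective, in particular $D$‑flat. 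Combined with the residual computation — base‑changing $A\otimes_R k(\mathfrak p)=k(\mathfrak p)[\bar F]^{[n]}$ along the PID $D\otimes_R k(\mathfrak p)=k(\mathfrak p)[\bar F]$ gives $A\otimes_D k(\mathfrak q)=k(\mathfrak q)^{[n]}$ for every prime $\mathfrak q$ of $D$ — this shows $A$ is an $\A^n$‑fibration over $D$.

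The proof then reduces to the local case: by Lemma \ref{one dimensional cancellation}, applied to the $(n+1)$‑variable ring $A$, it suffices to prove $A_{\mathfrak m}=R_{\mathfrak m}[F]^{[n]}$ for each maximal ideal $\mathfrak m$ of $R$. When $R_{\mathfrak m}$ is Artinian the non‑zerodivisor $a$ becomes a unit and one solves $W=a^{-1}(F-P)$ directly; so we may assume $R_{\mathfrak m}$ is a one‑dimensional reduced local ring, still seminormal (respectively containing $\bQ$). The remaining and hardest step is to promote the $\A^n$‑fibration to a polynomial algebra. I would first show $A\cong\operatorname{Sym}_D(Q)$ for a finitely generated projective $D$‑module $Q$ of rank $n$: via Theorem \ref{eq: local-global} this reduces to checking, at each maximal ideal $\mathfrak n$ of the two‑dimensional ring $D=R[F]$, that $A_{\mathfrak n}$ is a polynomial ring over $D_{\mathfrak n}$. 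This is exactly where one descends to a discrete valuation ring — through the normalization of the one‑dimensional local ring $R_{\mathfrak m}$, which is a finite product of DVRs — and invokes Theorem \ref{dvr} (valid in every characteristic, with seminormality controlling the descent through the conductor) or, when $\operatorname{char}k=0$, Theorem \ref{Sathaye} on triviality of $\A^2$‑fibrations; the small cases $n=1,2$ fall directly under Theorem \ref{res-stable} and Theorem \ref{dvr}. Once $A\cong\operatorname{Sym}_D(Q)$ is established, the stable identity gives $\operatorname{Sym}_D(Q\oplus D^{n-1})\cong A[T_1,\dots,T_{n-1}]\cong\operatorname{Sym}_D(D^{2n-1})$, so $Q\oplus D^{n-1}\cong D^{2n-1}$ by Lemma \ref{symm}; hence $Q$ is stably free, and Bass's cancellation theorem (Theorem \ref{Bass Cancellation}) forces $Q\cong D^n$, i.e. $A=D^{[n]}=R[F]^{[n]}$, so that $F$ is a coordinate.

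I expect the real difficulty to lie entirely in the penultimate step: establishing the local symmetric‑algebra structure of the $\A^n$‑fibration $A$ over the two‑dimensional base $D$. An $\A^n$‑fibration over a DVR need not be trivial for $n\ge 3$ in general, so the argument cannot merely cite fibration‑triviality; it must feed the \emph{stable} triviality into the DVR analysis and then descend from the normalization back to $R$, and it is precisely here that the seminormality hypothesis (or the presence of $\bQ$) is indispensable. By contrast, the residual‑ and stable‑coordinate observations, the flatness argument, and the concluding cancellation are comparatively routine.
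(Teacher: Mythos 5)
Your proposal stalls at precisely the point you flag as hard, and the tools you cite cannot close it. Your plan is: exhibit $A=R[Z_1,\dots,Z_n,W]$ as an $\A^n$-fibration over $D=R[F]$, prove local polynomiality of $A_{\mathfrak n}$ over $D_{\mathfrak n}$ at maximal ideals of $D$, then conclude via Theorem \ref{eq: local-global}, Lemma \ref{symm} and Theorem \ref{Bass Cancellation}. The preliminary steps are fine (the residual-coordinate computation over $R$, the $(n-1)$-stability from Theorem \ref{essen stable} with $m=0$, the flatness, the cancellation bookkeeping), but the central step --- trivializing the fibration locally --- is exactly the open problem for $n\geq 3$: neither Theorem \ref{dvr} (which treats only $F=aW-b$ in \emph{three} variables, i.e.\ $n=2$, over a DVR) nor Theorem \ref{Sathaye} (triviality of $\A^2$-fibrations only, and only over a DVR containing $\bQ$) applies to an $\A^n$-fibration over the two-dimensional local rings $D_{\mathfrak n}$, and you offer no mechanism for ``feeding the stable triviality into the DVR analysis'' or for the conductor descent you allude to. Your reading of the hypotheses is also off: seminormality does not enter through any descent to the normalization; in the paper it enters solely through Theorem \ref{res-stable} of Bhatwadekar--Dutta. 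As written, the proposal reduces the theorem to a statement at least as hard as the theorem itself.

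The missing idea is a change of coordinates that collapses the problem to \emph{two} variables over an enlarged base, and it is the only place where algebraic closedness of $k$ is used. After reducing to $R$ local via Lemma \ref{one dimensional cancellation} (your reduction to the local case is the same), the Nullstellensatz gives $R/\m=k$, and since $k\subset R$ splits the residue map, the reduction $g:=\eta(P)$ of $P$ modulo $\m$ is a coordinate of $k[Z_1,\dots,Z_n]$ whose complementary coordinate system $g_2,\dots,g_n$ (having coefficients in $k\subset R$) is \emph{simultaneously} a coordinate system over $R$: $R[Z_1,\dots,Z_n]=R[g,g_2,\dots,g_n]$. Setting $A:=R[g_2,\dots,g_n]$, one gets $R[Z_1,\dots,Z_n,W]=A[g,W]=A^{[2]}$, and $F=aW+P$ is checked to be a residual coordinate of this two-variable ring over $A$: at primes $Q$ of $A$ lying over $\m$ the image of $F$ equals that of the variable $g$ (since $a\in\m$ and $P\equiv g$ mod $\m$), while at primes lying over $\p\neq\m$ one uses that $\p$ is minimal, hence in $Ass(R)$, so the non-zerodivisor $a$ avoids $\p$ and is a unit in $k(\p)$ --- an argument essentially identical to your residual computation. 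Then Theorem \ref{res-stable} (applicable since $A_{red}$ is seminormal when $R_{red}$ is, or $A\supseteq\bQ$ in characteristic zero) yields that $F$ is an honest coordinate of $A[g,W]$. This sidesteps fibrations, symmetric algebras over $D$, and cancellation entirely in the local case; the only global input is Lemma \ref{one dimensional cancellation}, which you did invoke correctly.
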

\begin{proof}
By Lemma \ref{one dimensional cancellation}, it is enough to consider the case when $R$ is a local ring with unique maximal ideal $\m$. Since $R$ is an affine algebra over the algebraically closed field $k$, the residue field $\dfrac{R}{\m}$ is $k$ (by Hilbert's Nullstellensatz). Let $\eta$ denote the canonical map $R[Z_1,\dots,Z_n,W]\longrightarrow{k[Z_1,\dots,Z_n,W]}(\subset{R[Z_1,\dots,Z_n,W]})$.

\medskip
\indent
If $a\notin{\m}$, then $a$ is a unit in $R$ and hence $F$ is a coordinate in 
$R[ Z_{1},\dots, Z_{n},W]$. So, we assume that $a\in{\m}$. Then $\eta(F)=\eta(P)$. Since the image of $P$ is a coordinate in $\dfrac{R}{aR}[Z_1,\dots,Z_n]$, hence $g:=\eta(P)(=\eta(F))$ is a coordinate in $k[Z_1,\dots,Z_n]$ and hence in $R[Z_1,\dots,Z_n]$. Thus, there exist $g_2,\dots,g_n\in{k[Z_1,\dots,Z_n]}$ such that $k[Z_1,\dots,Z_n]=k[g,g_2,\dots,g_n]$ and hence $R[Z_1,\dots,Z_n]=R[g,g_2,\dots,g_n]$.

\medskip
\indent
Set  $A:=R[g_2,\dots,g_n]$ and $B:=R[Z_{1},\dots, Z_{n},W]=A[g,W](=A^{[2]})$. We now show that $F$ is a residual coordinate in $B=A[g,W]$. Let $Q$ be an arbitrary prime ideal of $A$ and $\p=Q\cap{R}$. If $\p=\m$, then $k=k(\p)\hookrightarrow{k(Q)}$ and the image of $F$ in $B\otimes_{A}k(Q)$, being the image of $g(=\eta(F))$ in $B\otimes_{A}k(Q)$, is a coordinate in $B\otimes_{A}k(Q)$. If $\p\neq{\m}$, then $\p$ is a minimal prime ideal of the one-dimensional ring $R$ and hence $\p\in{Ass(R)}$. Therefore, $a\notin{\p}$. Hence $a$ is a unit in $k(\p)$ and therefore the image of $F$ in $B\otimes_{A}k(Q)$ is a coordinate in $B\otimes_{A}k(Q)$. Thus, $F$ is a residual coordinate in $A[g,W]$. Hence, by Theorem \ref{res-stable}, $F$ is a coordinate in $A[g,W](=R[Z_1,\dots,Z_n,W]$).
\end{proof}

Now, we extend Theorem \ref{Kahoui} to any algebraically closed field of arbitrary characteristic. First, we prove an easy lemma on existence of exponential maps.
\noindent 
\begin{lem}\label{exp}
Let $R$ be a ring and $A=R[Z_1,\dots,Z_n]$. Let $S$ be a multiplicatively closed subset of $R$ consisting of non-zerodivisors in $R$ and $P\in{R[Z_1,\dots, Z_n]}~(=R^{[n]})$. If $P$ is a coordinate in $S^{-1}A$ then there exists an $R$-linear exponential map ${\varphi}_{W}: A\rightarrow{A[W]}$ such that $\varphi_W(P)=aW+P$, for some $a\in{S}$.  
\end{lem}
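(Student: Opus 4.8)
The plan is to build the desired exponential map first on the localization $S^{-1}A$, where the coordinate hypothesis makes a translation available, and then to descend it to $A$ after clearing denominators by rescaling the variable $W$.

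First I would record the hypothesis concretely. Since $P$ is a coordinate in $S^{-1}A$, there exist $Q_2,\dots,Q_n \in S^{-1}A$ with $S^{-1}A = S^{-1}R[P, Q_2, \dots, Q_n] = (S^{-1}R)^{[n]}$. On this polynomial ring I would define the ``translation in the $P$-direction'' map $\Phi_W \colon S^{-1}A \to S^{-1}A[W]$ by $\Phi_W(P) = P + W$ and $\Phi_W(Q_i) = Q_i$ for $2 \le i \le n$, extended as an $S^{-1}R$-algebra homomorphism. It is immediate that $\epsilon_0 \Phi_W = \mathrm{id}$ and that $\Phi_U \Phi_W = \Phi_{U+W}$, so $\Phi_W$ is an $S^{-1}R$-linear exponential map on $S^{-1}A$ with $\Phi_W(P) = P + W$.

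Next comes the descent. Writing $\Phi_W(Z_j) = \sum_{i \ge 0} \Phi^{(i)}(Z_j)\, W^i$ for the generators $Z_1, \dots, Z_n$ of $A$, each coefficient $\Phi^{(i)}(Z_j)$ lies in $S^{-1}A$ and only finitely many of them are nonzero. I would choose a single $s \in S$ clearing all these finitely many denominators at once, i.e. with $s\,\Phi^{(i)}(Z_j) \in A$ for every $i \ge 1$ and every $j$; note $\Phi^{(0)}(Z_j) = Z_j \in A$ already. Setting $a := s$ and composing $\Phi$ with the substitution $W \mapsto aW$, I define $\varphi_W(x) := \Phi_{aW}(x)$. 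Then $\varphi_W(P) = P + aW = aW + P$, as required, and the rescaling is engineered precisely so that $a^i \Phi^{(i)}(Z_j) = s^{i-1}\bigl(s\,\Phi^{(i)}(Z_j)\bigr) \in A$ for $i \ge 1$; hence $\varphi_W(Z_j) \in A[W]$, and $\varphi_W$ restricts to an $R$-algebra homomorphism $A \to A[W]$.

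The point that requires the most care, and where I expect the only real subtlety to lie, is verifying that this restricted map is genuinely an exponential map rather than merely an $R$-homomorphism. The substitution $W \mapsto aW$ amounts to precomposing the underlying $\mathbb{G}_a$-action with the group endomorphism $t \mapsto at$ of $\mathbb{G}_a$, so exponentiality is preserved; concretely I would check $\varphi_U \varphi_W = \Phi_{aU}\Phi_{aW} = \Phi_{a(U+W)} = \varphi_{U+W}$ and $\epsilon_0 \varphi_W = \epsilon_0 \Phi_{aW} = \mathrm{id}$ on $S^{-1}A$, and then observe that these identities restrict to $A$ since every map involved carries $A$ into the appropriate extension $A[W]$ (resp. $A[U,W]$). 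The finiteness of the set of nonzero coefficients $\Phi^{(i)}(Z_j)$ is exactly what makes a single denominator-clearing element $s$, and hence a single $a \in S$, available; this is the crux that lets the localized construction be pulled back to $A$.
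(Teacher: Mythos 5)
Your proposal is correct and follows essentially the same route as the paper: translate along the coordinate $P$ over the localization and then rescale $W$ by a suitable element of $S$ to descend the map to $A$ (the paper's $\psi_W(P):=P+c^mW$ is exactly your $\Phi_{aW}$ with $a=c^m$). The only cosmetic difference is that you clear denominators of the finitely many Taylor coefficients $\Phi^{(i)}(Z_j)$ directly, whereas the paper clears denominators of the coefficients $a_{i,j_1,\dots,j_n}$ appearing in the expansion of each $Z_i$ in the coordinate system $(P,g_2,\dots,g_n)$; the underlying mechanism, namely $a^i\Phi^{(i)}(Z_j)=s^{i-1}\bigl(s\,\Phi^{(i)}(Z_j)\bigr)\in A$, is identical.
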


\begin{proof}
Since $P$ is a coordinate in $S^{-1}A$, there exist $c\in{S}$ and $g_2, \dots, g_n\in{A}$ such that 
$$
A_{c}= R_{c}[g_2, \dots, g_n,P]~(={R_{c}[g_2,\dots,g_n]}^{[1]}),
$$
where $A_c$ and $R_c$ denote the localisation of the rings $A$ and $R$ respectively at the multiplicative set $\{1,c,c^2,\dots\}$.
Let $Z_i=\sum{a_{i, j_1,\dots, j_n}P^{j_1}g_{2}^{j_2}\dots g_{n}^{j_n}}$, where  $a_{i,j_1,\dots, j_n}\in{R_{c}}$, for all $i, 1\leq{i}\leq{n}$
and $j_1,\dots, j_n\geq{0}$ .
Then there exists $m\geq{0}$ such that $c^{m}a_{i,j_1,\dots, j_n}\in{R}$, for all $i, 1\leq{i}\leq{n}$
and $j_1,\dots, j_n\geq{0}$. Define an $R_{c}[g_2,\dots,g_n]$-algebra homomorphism  ${\psi_{W}}: A_{c}\rightarrow{ A_{c}[W]}$ by setting ${\psi_{W}}(P):=P+c^{m}W$. Clearly, ${\psi_{W}}$ is an exponential map and ${\psi_{W}}(Z_{i})=Z_{i}+ h_i$, for some $h_i\in{A[W]}$. Now, if we set $a$ to be $c^m$ and define $\varphi_{W}:={\psi_{W}}|_A$ then $\varphi_{W}$ is our desired exponential map. 
\end{proof}
We now generalize Theorem \ref{Kahoui}.
\begin{thm}\label{ext-Kahoui}
Let $k$ be an algebraically closed field, $R$ a one-dimensional affine $k$-algebra such that either the characteristic of $k$ is zero or $R_{red}$ is seminormal. Then, every residual coordinate in $A:=R[Z_1,\dots,Z_n]~(=R^{[n]}), n\geq{3}$, is a $1$-stable 
coordinate.
\end{thm}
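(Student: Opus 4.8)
The plan is to maneuver $F$ into the normal form $aW+P$ handled by our extension of Maubach's theorem (Theorem~\ref{ext-Maubach}), exploiting the residual-coordinate hypothesis in two different places: at the generic fibres of $R$, to manufacture such a presentation via an exponential map, and at the closed fibres, to verify Maubach's mod-$a$ condition.

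First I would reduce to the case $R$ reduced. Applying Lemma~\ref{nilradical} to the polynomial ring $R[Z_1,\dots,Z_n,W]=R^{[n+1]}$, we see that $F$ is a coordinate in $A[W]$ if and only if its image is a coordinate in $R_{red}[Z_1,\dots,Z_n,W]$. Moreover, for each prime $\p$ of $R$ the residue field $k(\p)$ and the fibre $A\otimes_R k(\p)$ are unchanged on passing to $R_{red}$ (since $k(\p)$ kills nilpotents), so the residual-coordinate hypothesis descends to $R_{red}$; the standing hypothesis ($\operatorname{char}k=0$, or $R_{red}$ seminormal) is likewise preserved. Thus we may assume $R$ reduced.

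Next, let $S$ be the set of non-zerodivisors of $R$, i.e. $S=R\setminus\bigcup_i \p_i$, where $\p_1,\dots,\p_r$ are the minimal (equivalently, associated) primes of the reduced one-dimensional ring $R$. Then $S^{-1}R$ is the total quotient ring $\prod_i k(\p_i)$, a finite product of fields, and the residual hypothesis at each $\p_i$ says that $F$ is a coordinate in each $k(\p_i)[Z_1,\dots,Z_n]$; hence $F$ is a coordinate in $S^{-1}A=(S^{-1}R)[Z_1,\dots,Z_n]$. By Lemma~\ref{exp} there is then an $R$-linear exponential map $\varphi_W\colon A\to A[W]$ with $\varphi_W(F)=aW+F$ for some non-zerodivisor $a\in S$. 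By Proposition~\ref{exp-auto}, the extension $\widetilde{\varphi_W}$ is an $R[W]$-automorphism of $A[W]$ carrying $F$ to $aW+F$; consequently $F$ is a coordinate in $A[W]$ if and only if $aW+F$ is.

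It then remains to apply Theorem~\ref{ext-Maubach} to $aW+F$, taking $P:=F$ with this particular $a$. Since $a$ is a non-zerodivisor in the one-dimensional Noetherian ring $R$, every prime containing $a$ is maximal, so $R/aR$ is Artinian; its primes are exactly the primes of $R$ containing $a$, with matching residue fields, whence the image $\overline F$ is a residual coordinate in $(R/aR)[Z_1,\dots,Z_n]$. By Proposition~\ref{Artinian}, $\overline F$ is therefore a coordinate in $(R/aR)[Z_1,\dots,Z_n]$, which is precisely the hypothesis required by Theorem~\ref{ext-Maubach}; the remaining hypotheses ($R$ a one-dimensional affine $k$-algebra, $a$ a non-zerodivisor, and $\operatorname{char}k=0$ or $R_{red}$ seminormal) hold by assumption. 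Theorem~\ref{ext-Maubach} then yields that $aW+F$ is a coordinate in $R[Z_1,\dots,Z_n,W]$, and hence so is $F$; that is, $F$ is a $1$-stable coordinate. I expect the main obstacle to be the production of the presentation $aW+F$ to which Maubach's criterion applies: the residual hypothesis only supplies a coordinate over the total quotient ring, and the real work lies in transporting this generic-fibre information back to $A$ itself through the exponential map of Lemma~\ref{exp} and its automorphism from Proposition~\ref{exp-auto}, so that $F$ and $aW+F$ become interchangeable as coordinates. Once this normal form is secured, the Artinian reduction modulo $a$ and Theorem~\ref{ext-Maubach} close the argument routinely.
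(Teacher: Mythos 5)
Your proposal is correct and follows essentially the same route as the paper's proof: reduce to $R$ reduced via Lemma~\ref{nilradical}, use the residual hypothesis over the Artinian total quotient ring $S^{-1}R$ to produce the exponential map of Lemma~\ref{exp} with $\varphi_W(F)=aW+F$, apply Theorem~\ref{ext-Maubach}, and conclude with the automorphism of Proposition~\ref{exp-auto}. Your explicit check that $R/aR$ is Artinian, so that Proposition~\ref{Artinian} supplies the mod-$a$ coordinate hypothesis of Theorem~\ref{ext-Maubach}, is a detail the paper leaves implicit and is verified correctly.
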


\begin{proof}
By Lemma \ref{nilradical}, it is enough to consider the case when $R$ is a reduced ring. Let $P(Z_1,\dots, Z_n)$ be a residual coordinate in $A$ and $S$ be the set of all non-zerodivisors in $R$. Since $S^{-1}R$ is Artinian, hence by Proposition \ref{Artinian}, $P$ is a coordinate in $S^{-1}A$. Therefore, by Lemma \ref{exp}, there exists an $R$-linear exponential map ${\varphi}_{W}: A\rightarrow{A[W]}$ such that $\varphi_W(P)=aW+P$, for some $a\in{S}$. Now, by Theorem \ref{ext-Maubach}, $aW+P$ is a coordinate in $A[W]$. Since by Proposition \ref{exp-auto}, the extension of ${\varphi}_{W}$ to $A[W]$ is an $R$-automorphism of $A[W]$ which maps $P$ to $aW+P$, $P$ is a $1$-stable coordinate in $A$.
\end{proof}
\begin{rem}
{\em Recall that if $R$ is as in Theorem \ref{ext-Kahoui} then a residual coordinate in $R[Z_1,Z_2]~(=R^{[2]})$ is actually a coordinate (\cite[Theorem 3.2]{BD1}).}   
\end{rem}
Now, using Lemma \ref{exp}, we will show that the condition ``$R$ contains $\bQ$'' can be dropped from Theorem \ref{Kahoui-general}. 
\begin{thm}\label{ext kahoui general}
Let $R$ be a Noetherian $d$-dimensional ring. Then every residual coordinate in $R[Z_1,\dots,Z_n]~(=R^{[n]})$ is a $(2^d-1)n$-stable coordinate. 
\end{thm}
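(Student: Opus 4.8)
The plan is to argue by induction on $d=\dim R$, with Proposition~\ref{Artinian} serving as the base case and the stable-coordinate bound of Theorem~\ref{essen stable} driving the inductive step. The decisive new ingredient, compared with the proof of Theorem~\ref{Kahoui-general}, is that the exponential map supplied by Lemma~\ref{exp} replaces the locally nilpotent derivation used in the $\bQ$-case, so no rationality hypothesis enters. For the base case $d=0$ the ring $R$ is Artinian, so by Proposition~\ref{Artinian} a residual coordinate is already a coordinate, i.e.\ a $0$-stable coordinate, matching $(2^{0}-1)n=0$. So I would fix $d\geq 1$, assume the assertion for every Noetherian ring of dimension $<d$, and take a residual coordinate $P$ in $A=R[Z_1,\dots,Z_n]$. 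First I would invoke Lemma~\ref{nilradical} to replace $R$ by $R_{red}$ and so assume $R$ reduced; the payoff is that the localisation $S^{-1}R$ at the set $S$ of all non-zerodivisors is then the total quotient ring, a finite product of fields, hence Artinian.

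Since the primes of $S^{-1}R$ correspond to the minimal primes of $R$ and the residue fields are unchanged, the image of $P$ remains a residual coordinate in $S^{-1}A=(S^{-1}R)[Z_1,\dots,Z_n]$, and because $S^{-1}R$ is Artinian, Proposition~\ref{Artinian} shows $P$ is a coordinate in $S^{-1}A$. Lemma~\ref{exp} then produces an $R$-linear exponential map $\varphi_W\colon A\to A[W]$ and a non-zerodivisor $a\in S$ with $\varphi_W(P)=aW+P=:F$. The key dimension-theoretic observation is that, $a$ being a non-zerodivisor, it lies in no associated (in particular no minimal) prime of $R$, so Krull's principal ideal theorem gives $\dim(R/aR)\leq d-1$. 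Moreover the residual-coordinate property descends along the quotient $R\to R/aR$, so the image $\overline P$ of $P$ is a residual coordinate in $(R/aR)[Z_1,\dots,Z_n]$. Applying the induction hypothesis (together with the obvious monotonicity of the stable-coordinate property, which absorbs any further drop in dimension) shows $\overline P$ is an $m$-stable coordinate there with $m=(2^{d-1}-1)n$.

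Next I would apply Theorem~\ref{essen stable} to the non-zerodivisor $a$ and the polynomial $P$: since $\overline P$ is $m$-stable, $F=aW+P$ is a $(2m+n-1)$-stable coordinate in $R[Z_1,\dots,Z_n,W]=R^{[n+1]}$, and the recursion telescopes, $2m+n-1=(2^{d}-1)n-1$, so $F$ is a coordinate in $R^{[n+1+(2^{d}-1)n-1]}=R^{[2^{d}n]}$. To transfer this back to $P$, I would use Proposition~\ref{exp-auto}: the extension $\widetilde{\varphi_W}$ is an $R$-automorphism of $A[W]=R^{[n+1]}$ carrying $P$ to $F$, and extending it by the identity on the remaining $(2^{d}-1)n-1$ variables gives an $R$-automorphism of $R^{[2^{d}n]}$ taking $P$ to $F$. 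As automorphisms preserve coordinates, $P$ is a coordinate in $R^{[2^{d}n]}=A^{[(2^{d}-1)n]}$, i.e.\ a $(2^{d}-1)n$-stable coordinate, closing the induction.

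The main obstacle here is conceptual rather than computational: one must recognise that passing to $R/aR$ for a non-zerodivisor $a$ strictly lowers the dimension, so that the induction terminates, while simultaneously preserving residual-coordinateness; and one must realise that Lemma~\ref{exp} furnishes the automorphism relating $P$ to $aW+P$ in \emph{arbitrary} characteristic, which is precisely what allows the hypothesis $\bQ\subseteq R$ to be dropped. The only arithmetic to monitor is that the recursion $m\mapsto 2m+n-1$ collapses exactly to the closed form $(2^{d}-1)n$ under this induction.
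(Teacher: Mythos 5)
Your proposal is correct and follows essentially the same route as the paper's own proof: induction on $d$ with Proposition~\ref{Artinian} as base case, reduction to $R$ reduced via Lemma~\ref{nilradical}, the exponential map from Lemma~\ref{exp} applied to the Artinian total quotient ring, the bound $2m+n-1$ from Theorem~\ref{essen stable}, and the transfer back to $P$ via the automorphism of Proposition~\ref{exp-auto}. Your additional justifications (the dimension drop $\dim(R/aR)\leq d-1$ since $a$ avoids all minimal primes, the descent of residual-coordinateness to $R/aR$, and the monotonicity of $m$-stability) are all sound details that the paper leaves implicit.
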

\begin{proof}
We prove the result by induction on $d$. If $d=0$, the result follows from Proposition \ref{Artinian}. Now, let $d\geq{1}$ and $P$ a residual coordinate in $A:={R[Z_1,\dots,Z_n]}$. We show that $P$ is a $(2^d-1)n$-stable coordinate in $A$.\\
\indent
By Lemma \ref{nilradical}, we may assume that $R$ is a reduced ring. Let $S$ be the set of all non-zerodivisors of $R$. Then, as $S^{-1}R$ is an Artinian ring, by the case $d=0$, $P$ is a coordinate in $S^{-1}A$. Hence, by Lemma \ref{exp}, there exists a non-zerodivisor $a$ in $R$ and an exponential map ${\varphi}_{W}:A\longrightarrow{A[W]}$ such that $\varphi_{W}(P)=aW+P$. Now, we observe that the image of $P$ is a residual coordinate in $\dfrac{R}{aR}[Z_1,\dots,Z_n]$ and $\dfrac{R}{aR}$ is a $(d-1)$-dimensional ring. So, by induction hypothesis, $P$ is a $(2^{d-1}-1)n$-stable coordinate in $\dfrac{R}{aR}[Z_1,\dots,Z_n]$. Hence, by Theorem \ref{essen stable}, $aW+P$ is an $r$-stable coordinate in $A[W]$, where $r=2n(2^{d-1}-1)+n-1=(2^d-1)n-1$. Since by Proposition \ref{exp-auto}, the extension of ${\varphi}_{W}$ to $A[W]$ is an $R$-automorphism of $A[W]$ which maps $P$ to $aW+P$, $P$ is an $r+1~(=(2^d-1)n)$-stable coordinate in $A$.
\end{proof}

Next, using  Theorem \ref{ext-Maubach} and Lemma \ref{exp}, we will show that under the additional hypothesis that $R$ is affine over an algebraically closed field of characteristic zero, we can get a sharper bound in Theorem \ref{ext kahoui general}. 
\begin{thm}\label{efficient bound}
Let $k$ be an algebraically closed field of characteristic zero and $R$ a finitely generated $k$-algebra of dimension $d$. Then every residual coordinate in $R[Z_1,\dots,Z_n]~(=R^{[n]})$ is an $r$-stable coordinate, where $r=(2^d-1)n-2^{d-1}(n-1)=2^{d-1}{(n+1)}-n$.
\end{thm}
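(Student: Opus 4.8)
The plan is to run the \emph{same} induction on $d$ used to prove Theorem~\ref{ext kahoui general}, preserving the identical recursion produced by the Berson--Bikker--van den Essen estimate (Theorem~\ref{essen stable}), and to gain the sharper constant solely by replacing the base case of that induction. The $\bQ$-free argument bottoms out at $d=0$, where a residual coordinate is merely a coordinate (Proposition~\ref{Artinian}), i.e.\ $0$-stable; the first inductive step therefore costs $n$ units and gives the value $n$ at $d=1$. Under the present hypotheses I would instead stop at $d=1$ and invoke Theorem~\ref{ext-Kahoui}: over a one-dimensional affine algebra over an algebraically closed field of characteristic zero every residual coordinate is already a $1$-stable coordinate. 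Since $1=2^{0}(n+1)-n$, this matches the asserted formula at $d=1$, and threading the better base value $a_{1}=1$ through the recursion $a_{d}=2a_{d-1}+n$ yields $a_{d}=2^{d-1}(n+1)-n$.

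For the inductive step ($d\ge 2$) I would copy the reduction of Theorem~\ref{ext kahoui general}. Using Lemma~\ref{nilradical} I may assume $R$ reduced; letting $S$ be the set of non-zerodivisors of $R$, the ring $S^{-1}R$ is Artinian, so $P$ is a coordinate in $S^{-1}A$ by Proposition~\ref{Artinian}, and Lemma~\ref{exp} then yields an $R$-linear exponential map $\varphi_{W}\colon A\to A[W]$ with $\varphi_{W}(P)=aW+P$ for some non-zerodivisor $a\in R$. The image of $P$ is a residual coordinate in $\dfrac{R}{aR}[Z_1,\dots,Z_n]$, and $\dfrac{R}{aR}$ is again a finitely generated $k$-algebra of dimension at most $d-1$: writing $\mathfrak{p}_1,\dots,\mathfrak{p}_s$ for the minimal primes of the reduced ring $R$, the element $a$ lies in none of them (being a non-zerodivisor), so on each affine-domain component $R/\mathfrak{p}_i$ the nonzero element $a$ drops the dimension by one, giving $\dim(R/aR)\le d-1$.

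By the induction hypothesis, together with the monotonicity of stable coordinates (a $j$-stable coordinate is automatically $j'$-stable for every $j'\ge j$), the image of $P$ is an $m$-stable coordinate in $\dfrac{R}{aR}[Z_1,\dots,Z_n]$ with $m=2^{d-2}(n+1)-n$. Theorem~\ref{essen stable} then makes $aW+P$ a $(2m+n-1)$-stable coordinate in $A[W]$; extending the $R$-automorphism of Proposition~\ref{exp-auto} (which carries $P$ to $aW+P$) over the $2m+n-1$ additional variables exhibits $P$ as a $(2m+n)$-stable coordinate in $A$. Since $2m+n=2^{d-1}(n+1)-n$, the induction closes.

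I expect no genuine difficulty in the induction, which is bookkeeping identical to Theorem~\ref{ext kahoui general}; the entire improvement, and the reason the ``affine over an algebraically closed field of characteristic zero'' hypothesis cannot be dropped, is concentrated in the base case, where Theorem~\ref{ext-Kahoui}---resting on Theorem~\ref{ext-Maubach} and ultimately on Theorem~\ref{res-stable}, which requires $R$ to contain $\bQ$ or $R_{red}$ to be seminormal---upgrades the crude $n$-stable bound of the general theory to the optimal $1$-stable one. The only minor points to verify are the dimension drop $\dim(R/aR)\le d-1$ recorded above and the small ranges $n\le 2$: a residual coordinate in $R^{[2]}$ is a coordinate by Theorem~\ref{res-stable}, hence $0$-stable and well within the stated bound, while Theorem~\ref{ext-Kahoui} supplies the base case for $n\ge 3$.
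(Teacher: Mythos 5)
Your proposal is correct and follows essentially the same route as the paper: the identical induction on $d$ with the recursion $a_d=2a_{d-1}+n$ from Theorem~\ref{essen stable} and Proposition~\ref{exp-auto}, with the improvement concentrated in the $d=1$ base case. The only cosmetic difference is that you cite Theorem~\ref{ext-Kahoui} there (and carefully patch $n\le 2$), while the paper invokes Theorem~\ref{ext-Maubach} directly---but since Theorem~\ref{ext-Kahoui} is deduced from Theorem~\ref{ext-Maubach} by the very same exponential-map argument, this is the same proof; your added checks (the dimension drop $\dim(R/aR)\le d-1$ and monotonicity of $m$-stability) are correct details the paper leaves implicit.
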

\begin{proof}
We prove the result by induction on $d$. If $d=1$, the result follows from Theorem \ref{ext-Maubach}. Let $d\geq{2}$ and $P$ a residual coordinate in $A:={R[Z_1,\dots,Z_n]}$. We show that $P$ is a $(2^{d-1}{(n+1)}-n)$-stable coordinate in $A$.\\
\indent
By Lemma \ref{nilradical}, we may assume that $R$ is a reduced ring. Let $S$ be the set of all non-zerodivisors of $R$. Since $S^{-1}R$ is an Artinian ring, by Proposition \ref{Artinian}, $P$ is a coordinate in $S^{-1}A$. Hence, by Lemma \ref{exp}, there exists a non-zerodivisor $a$ in $R$ and an exponential map ${\varphi}_{W}:A\longrightarrow{A[W]}$ such that $\varphi_{W}(P)= aW+P$. Now, we observe that the image of $P$ is a residual coordinate in $\dfrac{R}{aR}[Z_1,\dots,Z_n]$ and $\dfrac{R}{aR}$ is a $(d-1)$-dimensional ring containing an algebraically closed field of characteristic zero. So, by induction hypothesis, $P$ is a $(2^{d-2}(n+1)-n)$-stable coordinate in $\dfrac{R}{aR}[Z_1,\dots,Z_n]$. Now, arguing as in Theorem \ref{ext kahoui general}, the result follows from Theorem \ref{essen stable} and Proposition \ref{exp-auto}. 
\end{proof}

\medskip
The following question asks whether Theorem \ref{ext-Maubach} (and thereby Theorem \ref{ext-Kahoui}) can be extended to an affine algebra of \textit{any} dimension over \textit{any} field (not necessarily algebraically closed) of arbitrary characteristic.
\begin{ques}
Let $k$ be a field, $R$ an affine $k$-algebra, $a$ be a non-zerodivisor in $R$ and $P(Z_{1},\dots, Z_{n})\in{R[Z_{1},\dots, Z_{n}]}$ be such that the image of $P$ is a coordinate in $\dfrac{R}{aR}[Z_{1},\dots, Z_{n}]$. Suppose, $R_{red}$ is seminormal or the characteristic of $k$ is zero. Then, is $F:=aW+P$ a coordinate in $R[Z_{1},\dots, Z_{n},W]$?
\end{ques} 

\noindent
The following result shows that for $n=2$, the above question has an affirmative answer when $R$ is a Dedekind domain containing a field of characteristic zero.
\begin{prop}
Let $R$ be a Dedekind domain containing $\bQ$, $a\in{R-{\lbrace0\rbrace}}$ and $F= aW+P(Y,Z)\in{R[Y,Z,W]}~(=R^{[3]})$. If the image of $P$ is a coordinate in $\dfrac{R}{aR}[Y,Z]$, then $F$ is a coordinate in $R[Y,Z,W]$.
\end{prop}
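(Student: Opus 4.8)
The plan is to reduce to the case of a discrete valuation ring and then feed the situation into the machinery quoted in Section~2. By Lemma \ref{one dimensional cancellation} (applied with $n=3$ to $A=R[Y,Z,W]$), it suffices to prove that $F$ is a coordinate in $R_{\m}[Y,Z,W]$ for every maximal ideal $\m$ of $R$; since localizing a Dedekind domain at a maximal ideal gives a DVR, I may assume $R$ is a DVR with maximal ideal $\m=\pi R$, residue field $k=R/\pi R$ and fraction field $K$. If $a\notin\m$, then $a$ is a unit, $W=a^{-1}(F-P)$, and $R[Y,Z,W]=R[Y,Z,F]$, so $F$ is a coordinate. Hence assume $a\in\m$, i.e.\ $\m\supseteq aR$. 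Reducing the given coordinate further modulo $\m$ shows that $\overline{P}$, the image of $P$ in $k[Y,Z]$, is a coordinate in $k[Y,Z]$; in particular $\overline{P}\neq 0$, so $P\notin\pi R[Y,Z]$.

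Before the main case I would dispose of an easy situation. Suppose first that $P$ is a coordinate in $K[Y,Z]$. Then, over the two-prime ring $R$, $P$ is a residual coordinate in $R[Y,Z]$ (a coordinate at the generic point by this assumption and at the closed point since $\overline{P}$ is a coordinate), so by Theorem \ref{res-stable} $P$ is a coordinate in $R[Y,Z]$, say $R[Y,Z]=R[P,s]$. Then $R[Y,Z,W]=R[P,s,W]=R[F,s,W]$ via the triangular $R$-automorphism $P\mapsto aW+P=F$, $s\mapsto s$, $W\mapsto W$; thus $F$ is a coordinate. The substantial case, in which the extra variable $W$ is genuinely needed, is when $P$ is \emph{not} a coordinate in $K[Y,Z]$.

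In that case I would first show $B:=R[Y,Z,W]/(F)=R^{[2]}$. Since $\overline{P}\neq 0$, $F=aW+P$ is $R$-primitive and linear in $W$, hence prime, so $B$ is a domain containing $R$ and is therefore $R$-flat (torsion-free over the DVR $R$). Its fibres are polynomial: inverting $\pi$ gives $B\otimes_R K=K[Y,Z]=K^{[2]}$, and reducing modulo $\pi$ gives $B\otimes_R k=(k[Y,Z]/(\overline{P}))[W]=k^{[2]}$, using that $\overline{P}$ is a coordinate. Thus $B$ is an $\A^2$-fibration over the DVR $R\supseteq\bQ$, and Theorem \ref{Sathaye} yields $B=R^{[2]}$. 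Now Theorem \ref{dvr} applies with $a\in R$ and $b=-P$: it produces $Y_0\in R[Y,Z]$ with $a\in R[Y_0]$ (automatic here), $\overline{Y_0}\notin k$ and $K[Y,Z]=K[Y_0]^{[1]}$, and it guarantees that $F$ is a coordinate provided $\dim(k[\overline{F},\overline{Y_0}])=2$. Together with the reduction of the first paragraph and Lemma \ref{one dimensional cancellation}, verifying this dimension condition would complete the proof.

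I expect the verification of $\dim(k[\overline{F},\overline{Y_0}])=2$ to be the crux. Since $\overline{F}=\overline{P}$ and $k[Y,Z]=k[\overline{P}]^{[1]}$, an element of $k[Y,Z]$ is algebraic over $k(\overline{P})$ precisely when it lies in $k[\overline{P}]$; so the condition is equivalent to $\overline{Y_0}\notin k[\overline{P}]$, and this is exactly the place where the hypothesis that $\overline{P}$ is a \emph{coordinate} (not merely that $B=R^{[2]}$) must be used. The strategy I would follow is to compare the two fibres of the trivialization $B=R^{[2]}$: the image of the subring $R[Y,Z]$ in $B\otimes_R k=k^{[2]}$ is $k[Y,Z]/(\overline{P})=k[\overline{s}]$, a polynomial coordinate complementary to $W$, and $Y_0\in R[Y,Z]$ maps to $\overline{Y_0}\bmod\overline{P}\in k[\overline{s}]$. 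If $\overline{Y_0}\in k[\overline{P}]$ this image would be constant, whereas the generic trivialization forces $Y_0$ (a coordinate of $K[Y,Z]=B\otimes_R K$) to meet the special fibre in a genuine coordinate direction $\overline{s}$. Making this transversality precise---ideally by arranging $\overline{Y_0}=\overline{s}$ up to a unit, so that $\dim(k[\overline{P},\overline{Y_0}])=\dim(k[\overline{P},\overline{s}])=2$ outright---is the main technical obstacle, and is where the bulk of the work lies.
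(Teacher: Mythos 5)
Your reduction to a DVR via Lemma \ref{one dimensional cancellation}, the irreducibility/flatness argument showing $B=R[Y,Z,W]/(F)$ is an $\A^2$-fibration, and the appeal to Theorem \ref{Sathaye} to get $B=R^{[2]}$ all match the paper's proof exactly (your preliminary case split according to whether $P$ is a coordinate in $K[Y,Z]$ is correct but superfluous --- the paper's argument handles both cases uniformly, and Theorem \ref{res-stable} is not needed). But your proof stalls, by your own admission, precisely at the decisive step: verifying $\dim(k[\overline{F},\overline{Y_0}])=2$. That is a genuine gap, and your proposed route for closing it is the wrong one. You try to establish $\overline{Y_0}\notin k[\overline{P}]$ for the \emph{unknown} $Y_0$ supplied by Theorem \ref{dvr}; but the theorem only guarantees $\overline{Y_0}\notin k$, and your ``transversality'' heuristic (that the generic trivialization forces $Y_0$ to meet the special fibre in a coordinate direction) is not proved and is not clearly true for an arbitrary such $Y_0$.

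The idea you are missing is to exploit the hypothesis $a\in R$ to \emph{choose} $Y_0$, rather than to analyze the one handed to you. Since $a$ is a constant, the constraint $a\in R[Y_0]$ is vacuous, so $Y_0$ may be taken to be one of the original variables $Y$ or $Z$: both satisfy $\overline{Y_0}\notin k$ and $K[Y,Z]=K[Y_0]^{[1]}$. The paper then finishes in two lines: since $\overline{P}$ is a coordinate, $\overline{P}\notin k[Y]\cap k[Z]\,(=k)$; and because $k[Y]$ and $k[Z]$ are algebraically closed in $k[Y,Z]$, algebraic dependence of $\overline{P}$ with $Y$ (resp.\ $Z$) would force $\overline{P}\in k[Y]$ (resp.\ $\overline{P}\in k[Z]$). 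Hence at least one of $k[\overline{P},Y]$, $k[\overline{P},Z]$ has dimension two, i.e.\ the criterion $\dim(k[\overline{F},\overline{Y_0}])=2$ holds (recall $\overline{F}=\overline{P}$ as $a\in\pi R$) for an admissible choice of $Y_0$, and Theorem \ref{dvr} yields that $F$ is a coordinate. So your scaffolding is sound and identical to the paper's, but without this choice-of-$Y_0$ observation the ``bulk of the work'' you defer is exactly the proof, and your sketch does not supply it.
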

\begin{proof}
By Lemma \ref{one dimensional cancellation}, it is enough to assume that $R$ is a discrete valuation ring with parameter $t$. Let $k=\dfrac{R}{tR}$, $K=R[\frac{1}{t}]$,  $A=\dfrac{R[Y,Z,W]}{(F)}$ and $\overline{P}$ denote the image of $P$ in $k[Y,Z]$. Note that $a$ is a unit in $K$ and hence $F$ is a coordinate in $K[Y,Z,W]$; in particular $A[\frac{1}{t}]=K^{[2]}$.

\medskip
\indent
 If $a\notin{tR}$, then $a$ is a unit in $R$ and hence $F$ is a coordinate in $R[Y,Z,W]$. We now consider the case $a\in{tR}$. Considering the natural map $\dfrac{R}{aR}\rightarrow{\dfrac{R}{tR}(=k)}$, we see that $\overline{P}$ is a coordinate in $k[Y,Z]$ and hence $\dfrac{A}{tA}=k^{[2]}$. It also follows that $P\notin{tR[Y,Z]}$ and hence $a$ and $P$ are coprime in $R[Y,Z]$. Thus, $F(=aW+P)$ is irreducible in $R[Y,Z,W]$. So, $A$ is a torsion free module over the discrete valuation ring $R$ and hence $A$ is flat over $R$. Thus, $A$ is an $\A^{2}$-fibration over $R$ and hence by Theorem \ref{Sathaye}, $A=R^{[2]}$.
 As $\overline{P}$ is a coordinate in $k[Y,Z]$, we see that $\overline{P}\notin{k[Y]\cap{k[Z]}}(=k)$. Hence at least one of the rings $k[Y,\overline{P}]$ and $k[Z,\overline{P}]$ is of dimension two. Now, the result follows from Theorem \ref{dvr}.  
\end{proof}

\bigskip
\noindent
{\bf Acknowledgements.} The authors thank S. M. Bhatwadekar and Neena Gupta for the current version of Theorem \ref{ext-Maubach} and Theorem \ref{efficient bound} and their useful suggestions. The second author also acknowledges Council of Scientific and Industrial Research (CSIR) for their research grant.



{\small{

}}
 \end{document}